\newtheorem{thm}{Theorem}[section]
\newtheorem{prop}{Proposition}
\theoremstyle{definition} 
\theoremstyle{question} 
\theoremstyle{remark} 
\newcommand*{\rom}[1]{\expandafter\@slowromancap\romannumeral #1@}
\def\bege{\begin{equation}} \def\ende{\end{equation}}
   \def\begr{\begin{eqnarray}}
\def\endr{\end{eqnarray}} 
\def\bege{\begin{equation}} \def\ende{\end{equation}}
\def\begr{\begin{eqnarray}} \def\endr{\end{eqnarray}} \def\bnum{\begin{enumerate}} \def\enum{\end{enumerate}}
\begin{document}

\begin{center}
\textbf{Metric and Edge Metric Dimension of Zigzag Edge Coronoid Fused with Starphene}
\end{center}

\begin{center}
Sunny Kumar Sharma$^{1,a}$, Vijay Kumar Bhat$^{1,}$$^{\ast}$, Hassan Raza$^{2,b}$, and Karnika Sharma$^{1,c}$
\end{center}
$^{1}$School of Mathematics, Shri Mata Vaishno Devi University, Katra-$182320$, J \& K, India.\\
$^{2}$Business School, University of Shanghai for Science and Technology, Shanghai 200093, China.\\
$^{a}$sunnysrrm94@gmail.com, $^{\ast}$vijaykumarbhat2000@yahoo.com, $^{b}$hassan\_raza783@yahoo.com,\\
 $^{c}$19dmt001@smvdu.ac.in\\\\
\textbf{Abstract} Let $\Gamma=(V,E)$ be a simple connected graph. $d(\alpha,\epsilon)=min\{d(\alpha, w), d(\alpha, d\}$ computes the distance between a vertex $\alpha \in V(\Gamma)$ and an edge $\epsilon=wd\in E(\Gamma)$. A single vertex $\alpha$ is said to recognize (resolve) two different edges $\epsilon_{1}$ and $\epsilon_{2}$ from $E(\Gamma)$ if $d(\alpha, \epsilon_{2})\neq d(\alpha, \epsilon_{1}\}$. A subset of distinct ordered vertices $U_{E}\subseteq V(\Gamma)$ is said to be an edge metric generator for $\Gamma$ if every pair of distinct edges from $\Gamma$ are recognized by some element of $U_{E}$. An edge metric generator with a minimum number of elements in it, is called an edge metric basis for $\Gamma$. Then, the cardinality of this edge metric basis of $\Gamma$, is called the edge metric dimension of $\Gamma$, denoted by $edim(\Gamma)$. The concept of studying chemical structures using graph theory terminologies is both appealing and practical. It enables chemical researchers to more precisely and easily examine various chemical topologies and networks. In this article, we investigate a fascinating cluster of organic chemistry as a result of this motivation. We consider a zigzag edge coronoid fused with starphene and find its minimum vertex and edge metric generators.\\\\
\textbf{MSC(2020)}: 05C12, 05C90.\\\\
\textbf{Keywords:} Resolving set, starphene, hollow coronoid structure, metric dimension, independent set
\section{Introduction}
The theory of chemical graphs is the part of graph theory that deals with chemistry and mathematics. The study of various structures related to chemicals from the perspective of graphs is the subject of chemical graph theory. Chemical structures that are complicated and large in size are difficult to examine in their natural state. Then chemical graph theory is employed to make these complex chemical structures understandable. The molecular graph is a graph of a chemical structure in which the atoms are the vertices and the edges reflect the bonds between the atoms.\\

The physical attributes of a chemical structure are studied using a unique mathematical representation in which every atom (vertex) has its own identification or position within the given chemical structure. A few atoms (vertices) are chosen for this unique identification of the whole vertex set so that the set of atoms has a unique location to the selected vertices. This idea is known as metric basis in graph theory \cite{ps} and a resolving set (metric generator) in applied graph theory \cite{fr}. If any element of a metric generator fails (crashes), the entire system can be shut down, to address such problems the concept of fault tolerance in metric generators was introduced by Hernando et al. \cite{hn}.\\

Next, one can think that, rather than obtaining a unique atomic position, bonds could be utilized to shape the given structure, to address this Kelenc et al. \cite{emd} proposed and initiated the study of a new variant of metric dimension in non-trivial connected graphs that focuses on uniquely identifying the graph\textquotesingle s edges, called the edge metric dimension (EMD). Similar to the concept of fault tolerance in resolving sets, the idea of fault tolerance in edge resolving sets has also been introduced by Liu et al. \cite{lft}.\\

The researchers are motivated by the fact that the metric dimension has a variety of practical applications in everyday life and so it has been extensively investigated. Metric dimension is utilized in a wide range of fields of sciences, including robot navigation \cite{krr}, geographical routing protocols \cite{pil}, connected joints in network and chemistry \cite{cee}, telecommunications \cite{zf}, combinatorial optimization \cite{co}, network discovery and verification \cite{zf} etc. NP-hardness and computational complexity for the resolvability parameters are addressed in \cite{1,2}.\\

An organic compound with the chemical formula $C_{6}H_{6}$ is known as benzene. Many commercial, research and industrial operations use it as a solvent. Benzene is a key component of gasoline and can be found in crude oil. Dyes, detergents, resins, plastics, rubber lubricants, medicines, pesticides, and synthetic fibers are all made from it. When benzene rings are linked together, they form larger polycyclic aromatic compounds known as polyacenes.\\

The word coronoid was coined by Brunvoll et al. \cite{edd}, due to its possible relationship with benzenoid. A coronoid is a benzenoid that has a hole in the middle. Coronoid is a polyhex system that has its origin in organic chemistry. The zigzag-edge coronoids, denoted by $HC_{a,b,c}$, as shown in Fig. 1(i), can be considered as a structure obtained by fusing six linear polyacenes segments into a closed loop. This structure is also known as a hollow coronoid \cite{alikom}. Next, starphenes, denoted by $SP_{a,b,c}$, are the two-dimensional polyaromatic hydrocarbons with three polyacene arms joined by a single benzene ring, as shown in Fig. 1(ii). They can be utilized as logic gates in single-molecule electronics. Furthermore, as a type of 2D polyaromatic hydrocarbon, starphenes could be a promising material for organic electronics, such as organic light-emitting diodes (OLEDs) or organic field-effect transistors \cite{111}. A composite benzenoid obtained by fusing a zigzag-edge coronoid $HC_{a,b,c}$ with a starphene $SP_{a,b,c}$ is depicted in Fig. 2. We denote this system by $FCS_{a,b,c}$.\\

The metric dimension was investigated for numerous chemical structures because of several application of this parameter in chemical sciences. \cite{17n} discuss the vertex resolvability of $VC_{5}C_{7}$ and H-Napthalenic nanotubes. \cite{41n} determines the minimum resolving sets for silicates star networks, \cite{40n} set upper bounds for the minimum resolving sets of cellulose network, and \cite{16n} discuss the metric dimension of 2D lattice of Boron nanotubes (Alpha). Similarly, other variants of metric dimension, such as EMD, fault-tolerant metric dimension (FTMD), fault-tolerant edge metric dimension (FTEMD), etc have been studied for different graph families and chemical structures.\\

Azeem and Nadeem \cite{an}, studied metric dimension, EMD, FTMD, and FTEMD for polycyclic aromatic hydrocarbons. Sharma and Bhat \cite{sv, ssv}, studied metric dimension and EMD for some convex polytope graphs. Koam et al. \cite{alikom} studied the metric dimension and FTMD of hollow coronoid structures. The metric dimension and these recently introduced concepts have been studied by many authors for different graph families. For instance, path graphs, cycle graphs, prism graphs, wheel-related graphs, tadpole graphs, cycle with chord graphs, kayak paddle graphs, etc. But still there are several chemical graphs for which the metric dimension and the EMD has not been found yet. Such as the graph $FCS_{a,b,c}$. Thus, this paper aims to compute the metric dimension and the EMD of $FCS_{a,b,c}$.\\

The present paper is organized as follows. In Sect. 2 theory and concepts related to metric dimension, EMD, and independence in their respective metric generators have been discussed. In Sect. 3 we study the metric dimension and independence in the vertex metric generator of $FCS_{a,b,c}$. Sect. 4 gives the edge metric dimension of $FCS_{a,b,c}$. Finally, the conclusion and future work of this paper is presented in section 5.

\section{Preliminaries}

In this section, we discuss some basic concepts, definitions, and existing results related to the metric dimension, edge metric dimension, and independent (vertex and edge) metric generators of graphs.\\\\
Suppose $\Gamma=(V,E)$ is a non-trivial, connected, simple, and finite graph with the edge set $E(\Gamma)$ and the vertex set $V(\Gamma)$. We write $E$ instead of $E(\Gamma)$ and $V$ instead of $V(\Gamma)$ throughout the manuscript when there is no scope for ambiguity. The topological distance (geodesic) between two vertices $a$ and $w$ in $\Gamma$, denoted by $d(a,w)$, is the length of a shortest $a-w$ path between the vertices $a$ and $w$ in $\Gamma$.\\\\
\textbf{Degree of a vertex:} The number of edges that are incident to a vertex of a graph $H$ is known as its degree (or valency) and is denoted by $d_{\alpha}$. The minimum degree and the maximum degree of $\Gamma$ are denoted by $\delta(\Gamma)$ and $\Delta(\Gamma)$, respectively.\\\\
\textbf{Independent set:} \cite{sv} An independent set is a set of vertices in $\Gamma$, in which no two vertices are adjacent.\\\\
\textbf{Metric Dimension:} \cite{ps} If for any three vertices $\alpha$, $\beta$, $\gamma$ $\in V(\Gamma)$, we have $d(\alpha,\beta)\neq d_{G}(\alpha,\gamma)$, then the vertex $\alpha$ is said to recognize (resolve or distinguish) the pair of vertices $\beta$, $\gamma$ $(\beta\neq \gamma)$ in $V(\Gamma)$. If this condition of resolvability is fulfilled by some vertices comprising a subset $U \subseteq V(\Gamma)$ i.e., every pair of different vertices in the given undirected graph $\Gamma$ is resolved by at least one element of $U$, then $U$ is said to be a $metric$ $generator$ ($resolving$ $set$) for $\Gamma$. The $metric$ $dimension$ of the given graph $\Gamma$ is the minimum cardinality of a metric generator $U$, and is usually denoted by $dim(\Gamma)$. The metric generator $U$ with minimum cardinality is the metric basis for $\Gamma$. For an ordered subset of vertices $U=\{a_{1}, a_{2}, a_{3},...,a_{k}\}$, the $k$-code (representation or coordinate) of vertex $j$ in $V(\Gamma)$ is;
\vspace{-7.4mm}
\begin{center}
  \begin{eqnarray*}
  \gamma(j|R_{m})&=&(d(a_{1},j),d(a_{2},j),d(a_{3},j),...,d(a_{k},j))
  \end{eqnarray*}
\end{center}
Then we say that, the set $U$ is a metric generator for $\Gamma$, if $\gamma(a|R_{m})\neq \gamma(w|R_{m})$, for any pair of vertices $a,w \in V(\Gamma)$ with $a\neq w$.\\\\
\textbf{Independent metric generator (IMG):} \cite{ssv}
A set of distinct ordered vertices $U$ in $\Gamma$ is said to be an IMG for $\Gamma$ if $U$ is both independent as well as a metric generator.\\\\
\textbf{Edge Metric Dimension:} \cite{emd} The topological distance between a vertex $a$ and an edge $\epsilon=bw$ is given as $d(a,\epsilon)=min\{d(a,w), d(a,b)\}$. The vertex $\alpha$ is said to recognize (resolve or distinguish) the pair of edges $\epsilon_{1}$, $\epsilon_{2}$ with $\epsilon_{1}\neq \epsilon_{2})$ in $E(\Gamma)$. If this condition of edge resolvability is fulfilled by some vertices comprising a subset $U_{E} \subseteq V(\Gamma)$ i.e., every pair of different edges in the given undirected graph $\Gamma$ is resolved by at least one element of $U_{E}$, then $U_{E}$ is said to be an $edge$ $metric$ $generator$ (EMG) for $\Gamma$. The $edge$ $metric$ $dimension$ of the graph $\Gamma$ is the minimum cardinality of an ERS $U_{E}$, and is usually denoted by $edim(\Gamma)$. The edge metric generator (EMG) $U_{E}$ with minimum cardinality is the edge metric basis (EMB) for $\Gamma$. For an ordered subset of vertices $U_{E}=\{b_{1}, b_{2}, b_{3},...,b_{k}\}$, the $k$-edge code (coordinate) of an edge $\epsilon$ in $E(\Gamma)$ is;
\vspace{-7.4mm}
\begin{center}
  \begin{eqnarray*}
  \gamma_{E}(\epsilon|U_{E})&=&(d(b_{1},\epsilon),d(b_{2},\epsilon),d(b_{3},\epsilon),...,d(b_{k},\epsilon))
  \end{eqnarray*}
\end{center}
Then we say that, the set $U_{E}$ is an EMG for $\Gamma$, if $\gamma_{E}(\epsilon_{1}|U_{E})\neq \gamma(\epsilon_{2}|U_{E})$, for any pair of edges $\epsilon_{1}, \epsilon_{2} \in V(\Gamma)$ with $\epsilon_{1} \neq \epsilon_{2}$.\\\\
\textbf{Independent edge metric generator (IEMG):} \cite{ssv}
A set of distinct vertices $U^{i}_{E}$ (ordered) in $\Gamma$ is said to be an IEMG for $\Gamma$ if $U^{i}_{E}$ is both independent as well as a edge metric generator.\\\\

\begin{center}
  \begin{figure}[h!]
  \centering
   \includegraphics[width=3.5in]{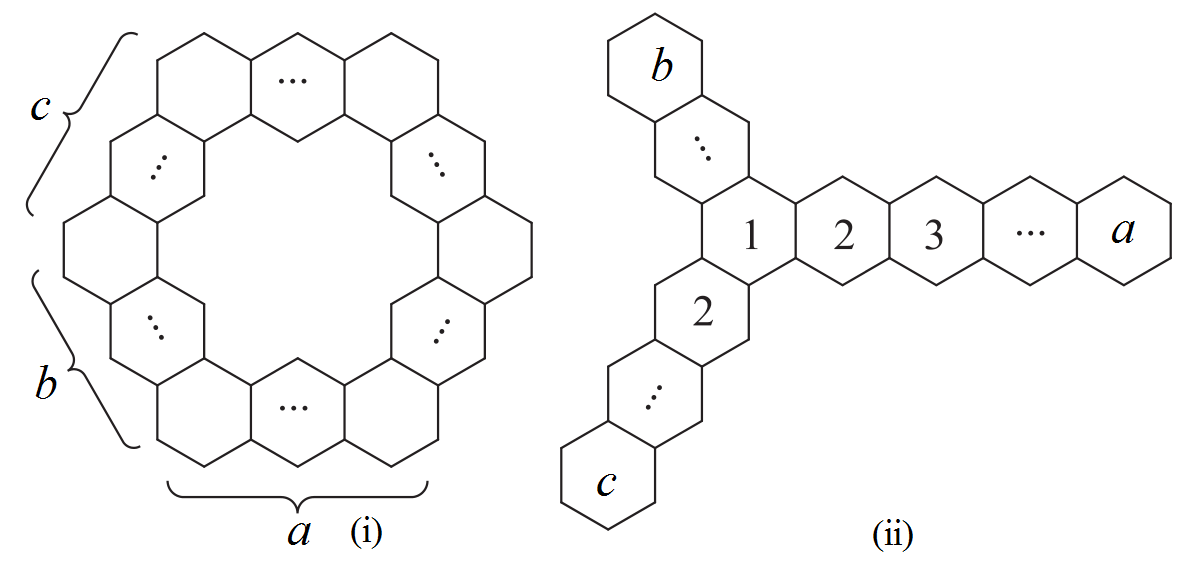}
  \caption{$HC_{a,b,c}$ and $SP_{a,b,c}$}\label{p2}
\end{figure}
\end{center}

$P_{n}$, $C_{n}$, and $K_{n}$ denotes respectively the path graph, cycle graph and the complete graph on $n$ vertices. Then the following results are helpful in obtaining the metric and the edge metric dimension of a graph.
\begin{prop} \cite{emd}
For $n \geq3$, we have $dim(P_{n})= edim(P_{n})=1$, $dim(C_{n})= edim(C_{n})=2$, and $dim(K_{n})= edim(K_{n})=n-1$.
\end{prop}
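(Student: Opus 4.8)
The plan is to verify the three equalities family by family, in each case producing an explicit generator to bound the dimension from above and then arguing that no smaller set can resolve everything.

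First I would handle the path. Labelling the vertices $v_1, v_2, \dots, v_n$ consecutively, I would take the single endpoint $U = \{v_1\}$ and observe that $d(v_1, v_i) = i - 1$, so the codes $\gamma(v_i \mid U)$ are the pairwise distinct integers $0, 1, \dots, n-1$; hence $U$ resolves $V$ and $dim(P_n) \le 1$. Since $n \ge 3$ forces at least two vertices, the empty set cannot resolve, so $dim(P_n) = 1$. The edge case is identical: the edges are $\epsilon_i = v_i v_{i+1}$ with $d(v_1, \epsilon_i) = i-1$ for $1 \le i \le n-1$, again all distinct, giving $edim(P_n) = 1$.

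Next I would treat the cycle $C_n$. For the upper bound I would pick two adjacent vertices and check that the pair of distances to them separates every vertex (and every edge), using that on a cycle the distance to a fixed vertex first increases and then decreases, so that a second reference point breaks the ties created by the first. For the lower bound I would invoke the reflective symmetry of $C_n$: for any single vertex $w$ there is a nontrivial automorphism fixing $w$ and swapping some pair of distinct vertices (resp. edges), which are therefore equidistant from $w$; hence one vertex never suffices and $dim(C_n) = edim(C_n) = 2$.

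Finally, for the complete graph $K_n$ every two distinct vertices lie at distance $1$. Thus a reference vertex $w$ assigns code $0$ to itself and $1$ to all other vertices, so $w$ fails to separate any two vertices of $V \setminus \{w\}$; a resolving set must therefore omit at most one vertex, forcing $dim(K_n) \ge n-1$, while any $n-1$ vertices clearly resolve, giving equality. For edges, a vertex $w$ satisfies $d(w, \epsilon) = 0$ exactly when $w$ is an endpoint of $\epsilon$ and $d(w, \epsilon) = 1$ otherwise; two edges are then separated by $w$ iff $w$ is an endpoint of precisely one of them, and the same omit-at-most-one counting yields $edim(K_n) = n-1$.

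The routine parts are the upper-bound constructions; the only steps needing care are the two lower bounds, namely making the symmetry argument for $C_n$ precise (exhibiting, for each candidate single vertex, an explicit equidistant pair) and the pigeonhole step for $K_n$ that rules out any generator of size $n-2$. None of these presents real difficulty, since all three families are extremal and their distance structure is completely explicit.
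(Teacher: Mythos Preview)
Your argument is correct and follows the standard elementary verifications for these three families. Note, however, that the paper does not actually supply its own proof of this proposition: it is stated with the citation \cite{emd} and used as a known result, so there is no in-paper argument to compare against. Your write-up is exactly the kind of direct verification one would expect for such a cited fact, and nothing in it is problematic.
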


\section{Metric Dimension of $FCS_{a,b,c}$}
In this section, we obtain the metric dimension and IVMG for $FCS_{a,b,c}$.\\\\
The fused hollow coronoid with starphene structure $FCS_{a,b,c}$ comprises of six sides in which three sides $(a,b,c)$ are symmetric to other three sides $(a,b,c)$ as shown in Fig. 2. This means that $FCS_{a,b,c}$ has three linear polyacenes segments consist of $a$, $b$, and $c$ number of benzene rings. It consists of $3a+3b+3c-11$ number of faces having six sides, two faces having $4a+2b+2c-18$ sides, a face having $4b+4c-18$ sides, and a face having $4a+4b+4c-6$ sides. $FCS_{a,b,c}$ has $6(a+b+c-6)$ number of vertices of degree two and $6(a+b+c-3)$ number of vertices of degree three. From this, we find that $\delta(FCS_{a,b,c})=2$ and $\Delta(FCS_{a,b,c})=3$. The vertex set and the edge set of $FCS_{a,b,c}$, are denoted by $V(FCS_{a,b,c})$ and $E(FCS_{a,b,c})$ respectively. Moreover, the cardinality of edges and vertices in $FCS_{a,b,c}$ is given by $|E(FCS_{a,b,c})|=3(5a+5b+5c-21)$ and $|V(FCS_{a,b,c})|=6(2a+2b+2c-9)$, respectively. The edge and vertex set of $FCS_{a,b,c}$ are describe as follows: $V(FCS_{a,b,c})=\{p_{1,d}, p_{2,d}|1\leq d\leq 2a-1\}\cup\{q_{1,d}, q_{2,d}|1\leq d\leq 2c-1\}\cup\{r_{1,d}, r_{2,d}|1\leq d\leq 2b-1\}\cup \{s_{1,d}, s_{2,d}|1\leq d\leq 2a-3\}\cup \{u_{1,d}, u_{2,d}|1\leq d\leq 2b-3\}\cup\{t_{1,d}, t_{2,d}|1\leq d\leq 2c-3\}\cup \{p_{3,d}, s_{3,d}|1\leq d\leq 2a-5\}\cup \{q_{3,d}, t_{3,d}|1\leq d\leq 2c-5\}\cup \{r_{3,d}, u_{3,d}|1\leq d\leq 2b-5\}$ \\\\
and\\\\
$E(FCS_{a,b,c})=\{p_{1,d}p_{1,d+1}, p_{2,d}p_{2,d+1}|1\leq d\leq 2a-2\}\cup\{q_{1,d}q_{1,d+1}, q_{2,d}q_{2,d+1}|1\leq d\leq 2c-2\}\cup\{r_{1,d}r_{1,d+1}, r_{2,d}r_{2,d+1}|1\leq d\leq 2b-2\}\cup \{s_{1,d}s_{1,d+1}, s_{2,d}s_{2,d+1}|1\leq d\leq 2a-4\}\cup \{u_{1,d}u_{1,d+1}, t_{2,d}t_{2,d+1}|1\leq d\leq 2b-4\}\cup\{t_{1,d}t_{1,d+1}, u_{2,d}u_{2,d+1}|1\leq d\leq 2c-4\}\cup\{p_{1,2d}s_{1,2d-1}, p_{2,2d}s_{2,2d-1}|1 \leq d \leq a-1\}\cup\{q_{1,2d}t_{1,2d-1}, q_{2,2d}u_{2,2d-1}|1 \leq d \leq c-1\}\cup\{r_{1,2d}u_{1,2d-1}, r_{2,2d}t_{2,2d-1}|1 \leq d \leq b-1\}\cup\{p_{3,d}p_{3,d+1}, s_{3,d}s_{3,d+1} \\|1\leq d\leq 2a-6\}\cup\{q_{3,d}q_{3,d+1}, t_{3,d}t_{3,d+1}|1\leq d\leq 2c-6\}\cup\{r_{3,d}r_{3,d+1}, u_{3,d}u_{3,d+1}|1\leq d\leq 2b-6\}\cup\{p_{3,2d-1}s_{3,2d-1}|1 \leq d \leq a-2\}\cup\{q_{3,2d-1}t_{3,2d-1}|1 \leq d \leq c-2\}\cup\{r_{3,2d-1}u_{3,2d-1}|1 \leq d \leq b-2\}\cup\{p_{3,1}r_{3,1}, q_{3,1}u_{3,1}, s_{3,1}t_{3,1}\}\cup\{p_{3,2a-5}t_{2,j-3}, s_{3,2a-5}u_{2,2}, q_{3,2c-5}u_{1,2c-4}, t_{3,2c-5}s_{2,2a-4}, r_{3,2b-5}s_{1,2a-4}, \\u_{3,2b-5}t_{1,2}\}\cup \{p_{1,1}q_{2,1}, s_{1,1}u_{2,1}, p_{1,i}q_{1,1}, s_{1,i-2}t_{1,1}, q_{1,j}r_{1,1}, t_{1,j-2}u_{1,1}, r_{1,k}p_{2,i}, u_{1,k-2}s_{2,l}, p_{2,1}r_{2,k}, \\s_{2,1}t_{2,k-2}, r_{2,1}q_{2,j}, t_{2,1}u_{2,j-2}\}$.\\\\
We name the vertices on the cycle $p_{1,1},...,p_{1,i},q_{1,1},...,q_{1,j}r_{1,1},...,r_{1,k},p_{2,1},...,p_{2,i},q_{2,1},...,q_{2,j}r_{2,1},...,\\r_{2,k}$ as the outer $pqr$-cycle vertices, the vertices on the cycle $s_{1,1},...,s_{1,i-2}, r_{3,k-4},...,r_{3,1}, p_{3,1},...,\\p_{3,i-4}, t_{2,j-2},...,t_{2,1}$ as the vertices of first interior cycle, the vertices on the cycle  $t_{1,1},...,t_{1,j-2}, u_{1,1},\\...,u_{1,k-2}, q_{3,j-4},...,q_{3,1}, u_{3,1},...,u_{3,k-4}$ as the vertices of second interior cycle, and the vertices on the cycle $u_{2,1},...,u_{2,k-2}, s_{2,1},...,s_{2,i-2}, t_{3,j-4},...,t_{3,1}, s_{3,1},...,s_{3,i-4}$ as the vertices of third interior cycle in $FCS_{a,b,c}$. In vertices, $p_{1,i}$, $p_{2,i}$, $q_{1,j}$, $q_{2,j}$, $r_{1,k}$, and $r_{2,k}$, the indices $i=2a-1$, $j=2c-1$ and $k=2b-1$. In the next result, we determine the metric dimension of $FCS_{a,b,c}$.

\begin{thm}
For positive integers $a,b,c\geq 4$, we have $dim(FCS_{a,b,c})=3$.
\end{thm}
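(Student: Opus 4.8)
The plan is to prove the two inequalities $dim(FCS_{a,b,c}) \ge 3$ and $dim(FCS_{a,b,c}) \le 3$ separately. For the lower bound, note first that $FCS_{a,b,c}$ has vertices of degree three and hence is not a path; by the well-known fact that $dim(\Gamma)=1$ exactly when $\Gamma\cong P_n$ (consistent with Proposition~1), this already gives $dim(FCS_{a,b,c})\ge 2$. To upgrade this to $3$ I would argue by contradiction, assuming that some two-element set $R=\{\alpha,\beta\}$ is a metric generator. The structure possesses a central order-two symmetry interchanging the two symmetric triples of arms $(a,b,c)$, together with the obvious reflective symmetries of the benzenoid; I would use this symmetry group to reduce the placement of the unordered pair $\{\alpha,\beta\}$ to a short list of position-types, classified by which of the named cycles (the outer $pqr$-cycle and the first, second, and third interior cycles) contain $\alpha$ and $\beta$. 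In each type the goal is to display two distinct vertices $u\ne v$ with $\gamma(u\,|\,R)=\gamma(v\,|\,R)$ — most naturally a pair exchanged by a symmetry fixing both landmarks, or a pair lying at equal arc-distance from $\alpha$ and from $\beta$ on a long symmetric cycle. Exhibiting such a collision in every case contradicts the assumption, yielding $dim(FCS_{a,b,c})\ge 3$.

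For the upper bound I would exhibit one explicit three-element set $W=\{w_1,w_2,w_3\}$ and verify directly that it resolves $FCS_{a,b,c}$. A natural candidate is a triple of outer-cycle corner vertices, one attached to each polyacene arm, for instance $W=\{p_{1,1},q_{1,1},r_{1,1}\}$; the point of choosing one landmark per arm is precisely to destroy the rotational symmetry so that the three lookalike interior cycles receive distinct codes. I would then compute, region by region along the layering already set up in the text, the geodesic distance $d(w_i,\cdot)$ from each landmark to every vertex as an explicit function of its index $d$, and assemble the representations $\gamma(\cdot\,|\,W)$. Within any single cycle the distance to a fixed landmark is piecewise strictly monotone in the arc-position, so codes cannot repeat along a cycle; codes belonging to different cycles are separated by the coordinate measuring radial depth into the structure. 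Verifying that all representations are pairwise distinct establishes $dim(FCS_{a,b,c})\le 3$, and with the lower bound this gives the claimed equality.

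I expect the main obstacle to be the distance bookkeeping in the upper bound rather than the lower bound. Because $FCS_{a,b,c}$ is a multiply connected planar benzenoid, a shortest path from a vertex to a landmark may run either way around a cycle or cut radially across the interior, so each distance formula splits into cases according to the index range and to which of the six arms the vertex occupies; the formulas are especially delicate in the six junction regions where the coronoid $HC_{a,b,c}$ is fused to the starphene $SP_{a,b,c}$. The subtlest points are (i) obtaining correct closed forms near these junctions and (ii) confirming that the three symmetric interior cycles, indistinguishable under the rotational symmetry of the bare structure, are genuinely separated by $W$ — if the first candidate triple fails to separate some mirror-related pair, I would perturb one landmark to break the residual symmetry and recheck. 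By contrast, once the symmetry reduction is in place the lower-bound collisions should follow quickly, so the bulk of the work is the exhaustive but essentially routine verification that the chosen triple distinguishes every pair of vertices.
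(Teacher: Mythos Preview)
Your plan matches the paper's proof in structure: the paper also establishes the upper bound by exhibiting an explicit three-vertex set and tabulating all vertex codes region by region, and the lower bound by ruling out $|U|=1$ via the path characterization and then running a case analysis (ten cases, organised exactly as you propose, by which of the four named cycles contain the two putative landmarks) that exhibits a colliding pair in each case. The one substantive difference is the choice of resolving set: the paper takes $U=\{p_{1,1},\,r_{1,1},\,r_{2,k}\}$, i.e.\ three \emph{alternating} corners of the outer hexagon, rather than your three consecutive corners $\{p_{1,1},q_{1,1},r_{1,1}\}$; the alternating choice breaks the central symmetry more decisively and yields more uniform distance formulas across the three interior cycles, so if you carry out the verification you may find it cleaner to adopt the paper's triple (which is exactly the ``perturb one landmark'' fallback you already anticipated).
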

\begin{proof}
In order to show that $dim(FCS_{a,b,c})\leq 3$, we construct a metric generator for $FCS_{a,b,c}$. Let $U=\{p_{1,1}, r_{1,1}, r_{2,k}\}$ be a set of distinct vertices from $FCS_{a,b,c}$. We claim that $U$ is a vertex metric generator for $FCS_{a,b,c}$. Now, to obtain $dim(FCS_{a,b,c}) \leq 3$, we can give metric coordinate to every vertex of $FCS_{a,b,c}$ with respect to the set $U$. For the vertices $\{\upsilon=p_{1,d}|1 \leq d \leq 2a-1\}$, the set of vertex metric coordinates is as follow:\\
$P_{1}=\{\gamma(\upsilon|U)=(d-1, 2a+2c-d-1, 2b+2c-2)| d=1\}\cup\{\gamma(\upsilon|U)=(d-1, 2a+2c-d-1, 2b+2c+d-5)|2\leq d \leq 2a-3\}\cup\{\gamma(\upsilon|U)=(d-1, 2a+2c-d-1, 2a+2b+2c-9)|d=2a-2\}\cup\{\gamma(\upsilon|U)=(d-1, 2a+2c-d-1, 2a+2b+2c-8)|d=2a-1\}$.\\\\
For the vertices $\{\upsilon=q_{1,d}|1 \leq d \leq 2c-1\}$, the set of vertex metric coordinates is as follow:\\
$Q_{1}=\{\gamma(\upsilon|U)=(2a+d-2, 2c-d, 2a+2b+2c-7)|d=1\}\cup\{\gamma(\upsilon|U)=(2a+d-2, 2c-d, 2a+2b+2c-8)|d=2\}\cup\{\gamma(\upsilon|U)=(2a+d-2, 2c-d, 2a+2b+2c-d-4)|3\leq d \leq 2c-2\}\cup\{\gamma(\upsilon|U)=(2a+d-2, 2c-d, 2a+2b-1)|d=2c-1\}$.\\\\
For the vertices $\{\upsilon=r_{1,d}|1 \leq d \leq 2b-1\}$, the set of vertex metric coordinates is as follow:\\
$R_{1}=\{\gamma(\upsilon|U)=(2a+2c-2, d-1, 2a+2b-d-1)|d=1\}\cup\{\gamma(\upsilon|U)=(2a+2c+d-5, d-1, 2a+2b-d-1)|2\leq d \leq 2c-3\}\cup\{\gamma(\upsilon|U)=(2a+2b+2c-9, d-1, 2a+2b-d-1)|d=2b-2\}\cup\{\gamma(\upsilon|U)=(2a+2b+2c-8, d-1, 2a+2b-d-1)|d=2b-1\}$.\\\\
For the vertices $\{\upsilon=p_{2,d}|1 \leq d \leq 2a-1\}$, the set of vertex metric coordinates is as follow:\\
$P_{2}=\{\gamma(\upsilon|U)=(d, 2a+2b-d-2, 2b+2c-1)|d=1\}\cup\{\gamma(\upsilon|U)=(d, 2a+2b-d-2, 2b+2c+d-4)|2\leq d \leq 2a-3\}\cup\{\gamma(\upsilon|U)=(d, 2a+2b-d-2, 2a+2b+2c-8)|d=2a-2\}\cup\{\gamma(\upsilon|U)=(d, 2a+2b-d-2, 2a+2b+2c-7)|d=2a-1\}$.\\\\
For the vertices $\{\upsilon=q_{2,d}|1 \leq d \leq 2c-1\}$, the set of vertex metric coordinates is as follow:\\
$Q_{2}=\{\gamma(\upsilon|U)=(d, 2a+2c-1, 2b+2c-d-2)|d=1\}\cup\{\gamma(\upsilon|U)=(d, 2a+2c+d-4, 2b+2c-d-2)|2\leq d \leq 2c-3\}\cup\{\gamma(\upsilon|U)=(d, 2a+2b+2c-8, 2b+2c-d-2)|d=2c-2\}\cup\{\gamma(\upsilon|U)=(d, 2a+2b+2c-7, 2b+2c-d-2)|d=2c-1\}$.\\\\
For the vertices $\{\upsilon=r_{2,d}|1 \leq d \leq 2b-1\}$, the set of vertex metric coordinates is as follow:\\
$R_{2}=\{\gamma(\upsilon|U)=(2c+d-1, 2a+2b+2c-8, 2b-d-1)|d=1\}\cup\{\gamma(\upsilon|U)=(2c+d-1, 2a+2b+2c-9, 2b-d-1)|d=2\}\cup\{\gamma(\upsilon|U)=(2c+d-1, 2a+2b+2c-d-5, 2b-d-1)|3\leq d \leq 2b-2\}\cup\{\gamma(\upsilon|U)=(2c+d-1, 2a+2b-2, 2b-d-1)|d=2b-1\}$.\\\\
For the vertices $\{\upsilon=s_{1,d}|1 \leq d \leq 2a-3\}$, the set of vertex metric coordinates is as follow:\\
$S_{1}=\{\gamma(\upsilon|U)=(d+1, 2a+2c-d-3, 2b+2c+d-5)|1\leq d \leq 2a-5\}\cup\{\gamma(\upsilon|U)=(d+1, 2a+2c-d-3, 2a+2b+2c-11)|d=2a-4\}\cup\{\gamma(\upsilon|U)=(d+1, 2a+2c-d-3, 2a+2b+2c-10)|d=2a-3\}$.\\\\
For the vertices $\{\upsilon=t_{1,d}|1 \leq d \leq 2c-3\}$, the set of vertex metric coordinates is as follow:\\
$T_{1}=\{\gamma(\upsilon|U)=(2a+d-2, 2c-d, 2a+2b+2c-9)|d=1\}\cup\{\gamma(\upsilon|U)=(2a+d-2, 2c-d, 2a+2b+2c-10 )|d=2\}\cup\{\gamma(\upsilon|U)=(2a+d-2, 2c-d, 2a+2b+2c-d-6)|3\leq d \leq 2c-3\}$.\\\\
For the vertices $\{\upsilon=u_{1,d}|1 \leq d \leq 2b-3\}$, the set of vertex metric coordinates is as follow:\\
$U_{1}=\{\gamma(\upsilon|U)=(2a+2c+d-5, d+1, 2a+2b-d-3)|1 \leq d \leq 2b-5\}\cup\{\gamma(\upsilon|U)=(2a+2b+2c-11, d+1, 2a+2b-d-3)|d=2b-4\}\cup \{\gamma(\upsilon|U)=(2a+2b+2c-10, d+1, 2a+2b-d-3)|d=2b-3\}$.\\\\
For the vertices $\{\upsilon=s_{2,d}|1 \leq d \leq 2a-3\}$, the set of vertex metric coordinates is as follow:\\
$S_{2}=\{\gamma(\upsilon|U)=(2b+2c+d-4, 2a+2b-d-4, d+2)|1\leq d \leq 2a-5\}\cup\{\gamma(\upsilon|U)=(2a+2b+2c-10, 2a+2b-d-4, d+2)|d=2a-4\}\cup\{\gamma(\upsilon|U)=(2a+2b+2c-9, 2a+2b-d-4, d+2)|d=2a-3\}$.\\\\
For the vertices $\{\upsilon=t_{2,d}|1 \leq d \leq 2c-3\}$, the set of vertex metric coordinates is as follow:\\
$T_{2}=\{\gamma(\upsilon|U)=(d+2, 2a+2c+d-4, 2b+2c-d-4)|1\leq d \leq 2c-5\}\cup\{\gamma(\upsilon|U)=(d+2, 2a+2b+2c-10, 2b+2c-d-4 )|d=2c-4\}\cup\{\gamma(\upsilon|U)=(d+2, 2a+2b+2c-9, 2b+2c-d-4)|d=2c-3\}$.\\\\
For the vertices $\{\upsilon=u_{2,d}|1 \leq d \leq 2b-3\}$, the set of vertex metric coordinates is as follow:\\
$U_{2}=\{\gamma(\upsilon|U)=(2c+d-1, 2a+2b+2c-10, 2b-d-1)|d=1\}\cup\{\gamma(\upsilon|U)=(2c+d-1, 2a+2b+2c-11, 2b-d-1)|d=2\}\cup \{\gamma(\upsilon|U)=(2c+d-1, 2a+2b+2c-d-7, 2b-d-1)|3 \leq d \leq 2b-3\}$.\\\\
For the vertices $\{\upsilon=p_{3,d}|1 \leq d \leq 2a-5\}$, the set of vertex metric coordinates is as follow:\\
$P_{3}=\{\gamma(\upsilon|U)=(2a+2c-d-6, 2a+2c+d-6, 2a+2b-d-4)|1\leq d \leq 2a-5\}$.\\\\
For the vertices $\{\upsilon=q_{3,d}|1 \leq d \leq 2c-5\}$, the set of vertex metric coordinates is as follow:\\
$Q_{3}=\{\gamma(\upsilon|U)=(2a+2b+d-7, 2b+2c-d-7, 2a+2c-d-5)|1\leq d \leq 2c-5\}$.\\\\
For the vertices $\{\upsilon=r_{3,d}|1 \leq d \leq 2b-5\}$, the set of vertex metric coordinates is as follow:\\
$R_{3}=\{\gamma(\upsilon|U)=(2a+2b-d-7, 2b+2c-d-5, 2a+2b+d-7)|1\leq d \leq 2b-5\}$.\\\\
For the vertices $\{\upsilon=s_{3,d}|1 \leq d \leq 2a-5\}$, the set of vertex metric coordinates is as follow:\\
$S_{3}=\{\gamma(\upsilon|U)=(2a+2c-d-5, 2a+2c+d-7, 2a+2b-d-5)|1\leq d \leq 2a-5\}$.\\\\
For the vertices $\{\upsilon=t_{3,d}|1 \leq d \leq 2c-5\}$, the set of vertex metric coordinates is as follow:\\
$T_{3}=\{\gamma(\upsilon|U)=(2a+2c+d-6, 2b+2c-d-6, 2a+2c-d-6)|1\leq d \leq 2c-5\}$.\\\\
For the vertices $\{\upsilon=u_{3,d}|1 \leq d \leq 2b-5\}$, the set of vertex metric coordinates is as follow:\\
$U_{3}=\{\gamma(\upsilon|U)=(2a+2b-d-6, 2b+2c-d-6, 2a+2b+d-6)|1\leq d \leq 2b-5\}$.\\\\
Now, from these sets of vertex metric codes for the graph $FCS_{a,b,c}$, we find that $|P_{1}|=|P_{2}|=2a-1$, $|Q_{1}|=|Q_{2}|=2c-1$, $|R_{1}|=|R_{2}|=2b-1$, $|S_{1}|=|S_{2}|=2a-3$, $|T_{1}|=|T_{2}|=2c-3$, $|U_{1}|=|U_{2}|=2b-3$, $|P_{3}|=|S_{3}|=2a-5$, $|Q_{3}|=|T_{3}|=2c-5$, and $|R_{3}|=|U_{3}|=2b-5$. We see that the sum of all of these cardinalities is equal to $|V(FCS_{a,b,c})|$ and which is $6(2a+2b+2c-9)$. Moreover, all of these sets are pairwise disjoint, which implies that $dim(FCS_{a,b,c})\leq 3$. To complete the proof, we have to show that $dim(FCS_{a,b,c})\geq 3$. To show this, we have to prove that there exists no vertex metric generator $U$ for $FCS_{a,b,c}$ such that
$|U|\leq2$. Since, the graph $FCS_{a,b,c}$ is not a path graph, so the possibility of a singleton vertex metric generator for $FCS_{a,b,c}$ is ruled out \cite{cee}. Next, suppose on the contrary that there exists an edge resolving set $U$ with $|U|=2$. Therefore, we have the following ten cases to be discussed (for the contradictions, the naturals $a$, $b$, and $c$ are $\geq5$):

\begin{center}
  \begin{figure}[h!]
  \centering
   \includegraphics[width=3.2in]{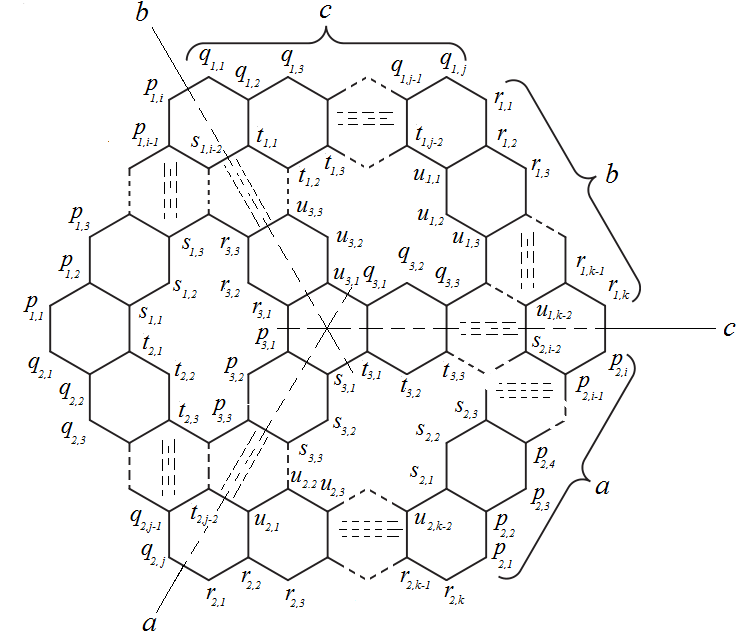}
  \caption{$FCS_{a,b,c}$}\label{p2}
\end{figure}
\end{center}
\textbf{Case(\rom{1})} When $U=\{a, b\}$, where $a$ and $b$ are the vertices from the outer $pqr$-cycle of $FCS_{a,b,c}$.
\begin{itemize}
  \item Suppose $U=\{p_{1,1}, p_{1,d}\}$, $p_{1,d}$ ($2\leq d \leq i$). Then $\gamma(q_{1,2}|U)=\gamma(t_{1,2} |U)$, for $2\leq d\leq i-1$; $\gamma(u_{1,2}|U)=\gamma(r_{1,2}|U)$ when $d=i$, a contradiction.
  \item Suppose $U=\{p_{1,1}, q_{1,d}\}$, $q_{1,d}$ ($1\leq d \leq j$). Then $\gamma(r_{1,2}|U)=\gamma(u_{1,2} |U)$, for $d=1$; $\gamma(p_{1,i}|U)=\gamma(s_{1,i-2}|U)$ when $2\leq d\leq j$, a contradiction.
  \item Suppose $U=\{p_{1,1}, r_{1,d}\}$, $r_{1,d}$ ($1\leq d \leq k$). Then $\gamma(p_{1,i}|U)=\gamma(s_{1,i-2}|U)$ for $d=1$; $\gamma(p_{1,3}|U) =\gamma(s_{1,1}|U)$, for $2\leq d\leq k$, a contradiction.
  \item Suppose $U=\{p_{1,1}, p_{2,d}\}$, $p_{2,d}$ ($1\leq d \leq i$). Then $\gamma(s_{1,1}|U)=\gamma(q_{2,2}|U)$ for $1\leq d\leq i-3$; $\gamma(p_{1,3}|U)=\gamma(s_{1,1}|U)$, for $i-2\leq d\leq i$, a contradiction.
  \item Suppose $U=\{p_{1,1}, r_{2,d}\}$, $r_{2,d}$ ($1\leq d \leq k$). Then $\gamma(s_{2,2}|U)=\gamma(p_{2,2}|U)$ for $d=1$; $\gamma(q_{2,j}|U)=\gamma(t_{2,j-2}|U)$ for $2\leq d\leq k$, a contradiction.
  \item Suppose $U=\{p_{1,1}, q_{2,d}\}$, $q_{2,d}$ ($1\leq d \leq j$). Then $\gamma(s_{2,2}|U)=\gamma(p_{2,2}|U)$ for $1\leq d\leq j$, a contradiction.
  \end{itemize}

\textbf{Case(\rom{2})} When $U=\{a, b\}$, where $a$ and $b$ are the vertices from the first interior cycle of $FCS_{a,b,c}$.

\begin{itemize}
  \item Suppose $U=\{s_{1,1}, s_{1,d}\}$, $s_{1,d}$ ($2\leq d \leq i-2$). Then $\gamma(q_{2,2}|U)=\gamma(t_{2,2}|U)$, for $2\leq d\leq i-1$, a contradiction.
  \item Suppose $U=\{s_{1,1}, r_{3,d}\}$, $r_{3,d}$ ($1\leq d \leq k-5$). Then $\gamma(p_{1,2}|U)=\gamma(t_{2,1}|U)$, for $1\leq d\leq k-5$, a contradiction.
  \item Suppose $U=\{s_{1,1}, t_{2,d}\}$, $t_{2,d}$ ($1\leq d \leq j-2$). Then $\gamma(p_{1,2}|U)=\gamma(s_{1,2}|U)$, for $1\leq d\leq j-2$, a contradiction.
  \item Suppose $U=\{s_{1,1}, p_{3,d}\}$, $p_{3,d}$ ($1\leq d \leq i-5$). Then $\gamma(s_{2,2}|U)=\gamma(p_{2,2}|U)$, for $1\leq d\leq 2a-3$, a contradiction.
  \end{itemize}

\textbf{Case(\rom{3})} When $U=\{a, b\}$, where $a$ and $b$ are the vertices from the second interior cycle of $FCS_{a,b,c}$.

\begin{itemize}
  \item Suppose $U=\{u_{1,1}, u_{1,d}\}$, $u_{1,d}$ ($1\leq d \leq k-2$). Then $\gamma(p_{2,i-1}|U)=\gamma(s_{2,i-3}|U)$, for $1\leq d\leq k-2$, a contradiction.
  \item Suppose $U=\{u_{1,1}, t_{1,d}\}$, $t_{1,d}$ ($1\leq d \leq j-2$). Then $\gamma(r_{1,1}|U)=\gamma(r_{1,3}|U)$, for $1\leq d\leq j-2$, a contradiction.
  \item Suppose $U=\{u_{1,1}, u_{3,d}\}$, $u_{3,d}$ ($1\leq d \leq k-5$). Then $\gamma(p_{1,i}|U)=\gamma(p_{1,i-2}|U)$, for $1\leq d\leq k-5$, a contradiction.
  \item Suppose $U=\{u_{1,1}, q_{3,d}\}$, $q_{3,d}$ ($1\leq d \leq j-5$). Then $\gamma(p_{1,i}|U)=\gamma(p_{1,i-2}|U)$, for $1\leq d\leq j-5$, a contradiction.
\end{itemize}

\textbf{Case(\rom{4})} When $U=\{a, b\}$, where $a$ and $b$ are the vertices from the third interior cycle of $FCS_{a,b,c}$.

\begin{itemize}
  \item Suppose $U=\{s_{2,1}, s_{2,d}\}$, $s_{2,d}$ ($2\leq d \leq i-2$). Then $\gamma(r_{2,k}|U)=\gamma(r_{2,k-2}|U)$, for $2\leq d\leq i-2$, a contradiction.
  \item Suppose $U=\{s_{2,1}, u_{2,d}\}$, $u_{2,d}$ ($1\leq d \leq k-2$). Then $\gamma(p_{2,1}|U)=\gamma(p_{2,3}|U)$, for $1\leq d\leq k-2$, a contradiction.
  \item Suppose $U=\{s_{2,1}, s_{3,d}\}$, $s_{3,d}$ ($1\leq d \leq i-5$). Then $\gamma(q_{2,j}|U)=\gamma(q_{2,j-2}|U)$, for $1\leq d\leq i-5$, a contradiction.
  \item Suppose $U=\{s_{2,1}, t_{3,d}\}$, $t_{3,d}$ ($1\leq d \leq j-5$). Then $\gamma(q_{2,j}|U)=\gamma(q_{2,j-2}|U)$, for $1\leq d\leq j-5$, a contradiction.
\end{itemize}

\textbf{Case(\rom{5})} When $U=\{a, b\}$, where $a$ is in outer $pqr$-cycle and $b$ is in the first interior cycle of $FCS_{a,b,c}$.

\begin{itemize}
  \item Suppose $U=\{p_{1,1}, s_{1,d}\}$, $s_{1,d}$ ($1\leq d \leq i-2$). Then $\gamma(r_{2,2}|U)=\gamma(u_{2,2}|U)$, for $1\leq d\leq i-2$, a contradiction.
  \item Suppose $U=\{p_{1,1}, t_{2,d}\}$, $t_{2,d}$ ($1\leq d \leq j-2$). Then $\gamma(r_{2,2}|U)=\gamma(u_{2,2}|U)$, for $1\leq d\leq j-2$, a contradiction.
  \item Suppose $U=\{p_{1,1}, r_{3,d}\}$, $r_{3,d}$ ($1\leq d \leq k-5$). Then $\gamma(q_{1,4}|U)=\gamma(t_{1,4}|U)$, for $1\leq d\leq k-5$, a contradiction.
  \item Suppose $U=\{p_{1,1}, p_{3,d}\}$, $p_{3,d}$ ($1\leq d \leq i-5$). Then $\gamma(q_{1,4}|U)=\gamma(t_{1,4}|U)$, for $1\leq d\leq i-5$, a contradiction.
\end{itemize}

\textbf{Case(\rom{6})} When $U=\{a, b\}$, where $a$ is in outer $pqr$-cycle and $b$ is in the second interior cycle of $FCS_{a,b,c}$.

\begin{itemize}
  \item Suppose $U=\{p_{1,1}, t_{1,d}\}$, $t_{1,d}$ ($1\leq d \leq j-2$). Then $\gamma(u_{1,2}|U)=\gamma(r_{1,2}|U)$, for $1\leq d\leq j-2$, a contradiction.
  \item Suppose $U=\{p_{1,1}, u_{1,d}\}$, $u_{1,d}$ ($1\leq d \leq k-2$). Then $\gamma(t_{1,1}|U)=\gamma(u_{3,k-5}|U)$, for $1\leq d\leq k-2$, a contradiction.
  \item Suppose $U=\{p_{1,1}, u_{3,d}\}$, $u_{3,d}$ ($1\leq d \leq k-5$). Then $\gamma(q_{1,4}|U)=\gamma(t_{1,4}|U)$, for $1\leq d\leq k-5$, a contradiction.
  \item Suppose $U=\{p_{1,1}, q_{3,d}\}$, $q_{3,d}$ ($1\leq d \leq j-5$). Then $\gamma(u_{3,2}|U)=\gamma(r_{3,1}|U)$, for $1\leq d\leq j-5$, a contradiction.
\end{itemize}

\textbf{Case(\rom{7})} When $U=\{a, b\}$, where $a$ is in outer $pqr$-cycle and $b$ is in the third interior cycle of $FCS_{a,b,c}$.

\begin{itemize}
  \item Suppose $U=\{p_{1,1}, u_{2,d}\}$, $u_{2,d}$ ($1\leq d \leq k-2$). Then $\gamma(s_{2,2}|U)=\gamma(p_{2,2}|U)$, for $1\leq d\leq k-2$, a contradiction.
  \item Suppose $U=\{p_{1,1}, s_{3,d}\}$, $s_{3,d}$ ($1\leq d \leq i-5$). Then $\gamma(u_{3,4}|U)=\gamma(r_{2,4}|U)$, for $1\leq d\leq i-5$, a contradiction.
  \item Suppose $U=\{p_{1,1}, s_{2,d}\}$, $s_{2,d}$ ($1\leq d \leq i-2$). Then $\gamma(s_{1,1}|U)=\gamma(q_{2,2}|U)$, for $1\leq d\leq i-5$;
   $\gamma(s_{3,2}|U)=\gamma(p_{3,1}|U)$, for $i-4\leq d\leq i-2$, a contradiction.
  \item Suppose $U=\{p_{1,1}, t_{3,d}\}$, $t_{3,d}$ ($1\leq d \leq j-5$). Then $\gamma(s_{3,2}|U)=\gamma(p_{3,1}|U)$, for $1\leq d\leq j-5$, a contradiction.
\end{itemize}

\textbf{Case(\rom{8})} When $U=\{a, b\}$, where $a$ is in first interior cycle and $b$ is in the second interior cycle of $FCS_{a,b,c}$.

\begin{itemize}
  \item Suppose $U=\{s_{1,1}, t_{1,d}\}$, $t_{1,d}$ ($1\leq d \leq j-2$). Then $\gamma(t_{2,2}|U)=\gamma(q_{2,2}|U)$, for $1\leq d\leq j-2$, a contradiction.
  \item Suppose $U=\{s_{1,1}, u_{3,d}\}$, $u_{3,d}$ ($1\leq d \leq k-5$). Then $\gamma(p_{1,2}|U)=\gamma(t_{2,1}|U)$, for $1\leq d\leq k-5$, a contradiction.
  \item Suppose $U=\{s_{1,1}, u_{1,d}\}$, $u_{1,d}$ ($1\leq d \leq k-2$). Then $\gamma(s_{3,2}|U)=\gamma(p_{3,1}|U)$, for $1\leq d\leq k-2$, a contradiction.
  \item Suppose $U=\{s_{1,1}, q_{3,d}\}$, $q_{3,d}$ ($1\leq d \leq j-5$). Then $\gamma(s_{3,2}|U)=\gamma(p_{3,1}|U)$, for $1\leq d\leq j-5$, a contradiction.
\end{itemize}

\textbf{Case(\rom{9})} When $U=\{a, b\}$, where $a$ is in first interior cycle and $b$ is in the third interior cycle of $FCS_{a,b,c}$.

\begin{itemize}
  \item Suppose $U=\{s_{1,1}, u_{2,d}\}$, $u_{2,d}$ ($1\leq d \leq k-2$). Then $\gamma(p_{1,1}|U)=\gamma(p_{1,3}|U)$, for $1\leq d\leq k-2$, a contradiction.
  \item Suppose $U=\{s_{1,1}, s_{2,d}\}$, $s_{2,d}$ ($1\leq d \leq i-2$). Then $\gamma(u_{3,2}|U)=\gamma(r_{3,1}|U)$, for $1\leq d\leq i-2$, a contradiction.
  \item Suppose $U=\{s_{1,1}, t_{3,d}\}$, $t_{3,d}$ ($1\leq d \leq j-5$). Then $\gamma(u_{3,2}|U)=\gamma(r_{3,1}|U)$, for $1\leq d\leq j-5$, a contradiction.
  \item Suppose $U=\{s_{1,1}, s_{3,d}\}$, $s_{3,d}$ ($1\leq d \leq i-5$). Then $\gamma(r_{2,4}|U)=\gamma(u_{2,4}|U)$, for $1\leq d\leq i-5$, a contradiction.
\end{itemize}

\textbf{Case(\rom{10})} When $U=\{a, b\}$, where $a$ is in second interior cycle and $b$ is in the third interior cycle of $FCS_{a,b,c}$.

\begin{itemize}
  \item Suppose $U=\{u_{1,1}, u_{2,d}\}$, $u_{2,d}$ ($1\leq d \leq k-2$). Then $\gamma(r_{3,3}|U)=\gamma(u_{3,2}|U)$, for $1\leq d\leq k-2$, a  contradiction.
  \item Suppose $U=\{u_{1,1}, s_{2,d}\}$, $s_{2,d}$ ($1\leq d \leq i-2$). Then $\gamma(q_{1,j-1}|U)=\gamma(t_{1,j-3}|U)$, for $1\leq d\leq i-2$, a contradiction.
  \item Suppose $U=\{u_{1,1}, t_{3,d}\}$, $t_{3,d}$ ($1\leq d \leq j-5$). Then $\gamma(s_{3,2}|U)=\gamma(p_{3,1}|U)$, for $1\leq d\leq j-5$, a contradiction.
  \item Suppose $U=\{u_{1,1}, s_{3,d}\}$, $s_{3,d}$ ($1\leq d \leq i-5$). Then $\gamma(t_{3,2}|U)=\gamma(q_{3,1}|U)$, for $1\leq d\leq i-5$, a contradiction.
\end{itemize}

As a result, we infer that for $FCS_{a,b,c}$, there is no vertex metric generator $U$ such that $|U|=2$. Therefore, we must have $|U| \geq 3$ i.e., $dim(FCS_{a,b,c})\geq 3$. Hence, $dim(FCS_{a,b,c})=3$, which concludes the theorem.
\end{proof}
In terms of minimum an IVMG, we have the following result

\begin{thm}
For $a,b,c\geq4$, the graph $FCS_{a,b,c}$ has an IVMG with cardinality three.
\end{thm}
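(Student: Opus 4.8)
The plan is to reuse the metric generator already produced in the proof of Theorem 1 and simply verify that it is independent, thereby exhibiting an IVMG of cardinality three directly. Recall that Theorem 1 established both that $dim(FCS_{a,b,c})=3$ and that the specific set $U=\{p_{1,1}, r_{1,1}, r_{2,k}\}$ (with $k=2b-1$) is a metric generator for $FCS_{a,b,c}$. Since an IVMG is by definition a set that is both independent and a metric generator, it suffices to check that these three particular vertices are pairwise non-adjacent. Independence together with the already-established resolving property of $U$ then immediately yields an IVMG of size three, and the lower bound $dim(FCS_{a,b,c})=3$ from Theorem 1 guarantees that no independent metric generator of smaller cardinality can exist.

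First I would read off the neighbourhood of each of the three chosen vertices directly from the edge set $E(FCS_{a,b,c})$. All three lie on the outer $pqr$-cycle and each turns out to have degree two: $p_{1,1}$ is adjacent only to $p_{1,2}$ (from the segment $p_{1,d}p_{1,d+1}$ at $d=1$) and to $q_{2,1}$ (from the junction edge $p_{1,1}q_{2,1}$); $r_{1,1}$ is adjacent only to $r_{1,2}$ and to $q_{1,j}$ (from the junction edge $q_{1,j}r_{1,1}$); and $r_{2,k}$ is adjacent only to $r_{2,k-1}$ (from $r_{2,d}r_{2,d+1}$ at $d=2b-2$) and to $p_{2,1}$ (from the junction edge $p_{2,1}r_{2,k}$). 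Next I would confirm that none of these three neighbourhoods contains another element of $U$: $p_{1,1}$ is joined to neither $r_{1,1}$ nor $r_{2,k}$, $r_{1,1}$ is joined to neither $p_{1,1}$ nor $r_{2,k}$, and $r_{2,k}$ is joined to neither $p_{1,1}$ nor $r_{1,1}$. Hence $U$ is an independent set, and combined with its resolving property from Theorem 1 it is an IVMG with $|U|=3$, which establishes the claim.

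I do not expect a serious obstacle here; the only real content is the adjacency bookkeeping, and the hypothesis $a,b,c\geq 4$ guarantees that the outer cycle is long enough for the three chosen vertices to be genuinely distinct and well separated, so no degenerate coincidences can force two of them to coincide or to become adjacent. The one point requiring care is to scan the entire edge list, including the long family of junction edges that couple the outer cycle to the interior cycles, and not merely the consecutive-edge segments, since a priori a coupling edge could conceivably link two of the chosen vertices; ruling this out is exactly the verification carried out above. Once that check is complete, the theorem follows without any further distance computations, since all the required codes were already tabulated in the proof of Theorem 1.
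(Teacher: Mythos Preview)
Your proposal is correct and follows essentially the same approach as the paper: you reuse the metric generator $U=\{p_{1,1}, r_{1,1}, r_{2,k}\}$ from Theorem~1 and verify that it is independent, which is exactly what the paper does (albeit more tersely, without explicitly listing the neighbourhoods). Your adjacency check is in fact more thorough than the paper's own argument, which simply asserts that independence is easy to confirm.
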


\begin{proof}
To show that, for zigzag edge coronoid fused with starphene $FCS_{a,b,c}$, there exists an IVMG $U^{i}$ with $|U^{i}|=3$, we follow the same technique as used in Theorem $1$.\\\\
Suppose $U^{i} = \{p_{1,1}, r_{1,1}, r_{2, k}\} \subset V(FCS_{a,b,c})$. Now, by using the definition of an independent set and following the same pattern as used in Theorem $1$, it is simple to show that the set of vertices $U^{i}= \{p_{1,1}, r_{1,1}, r_{2, k}\}$ forms an IVMG for $FCS_{a,b,c}$ with $|U^{i}|=3$, which concludes the theorem. \\
\end{proof}

\section{Edge Metric Dimension of $FCS_{a,b,c}$}
In this section, we obtain the metric dimension and IVMG for $FCS_{a,b,c}$.\\\\

\begin{thm}
For positive integers $a,b,c\geq 4$, we have $edim(FCS_{a,b,c})=3$.
\end{thm}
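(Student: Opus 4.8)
The plan is to establish the two inequalities $edim(FCS_{a,b,c}) \le 3$ and $edim(FCS_{a,b,c}) \ge 3$ separately, reusing the same three vertices $U_E = \{p_{1,1}, r_{1,1}, r_{2,k}\}$ that served as a metric basis in Theorem 1.

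For the upper bound I would exploit the defining identity $d(x,\epsilon) = \min\{d(x,u), d(x,v)\}$ for an edge $\epsilon = uv$, so that every edge code is read off directly from the vertex distances already tabulated in the proof of Theorem 1. Concretely, I would group the edges of $FCS_{a,b,c}$ into families parallel to the edge-set description: the segment edges running along each arm (such as $p_{1,d}p_{1,d+1}$), the rung edges joining consecutive cycles (such as $p_{1,2d}s_{1,2d-1}$), and the special linking edges at the arm junctions. For each family I would write $\gamma_E(\epsilon|U_E)$ as an explicit piecewise formula in $d,a,b,c$, obtained by taking the coordinatewise minimum of the two endpoint codes drawn from the sets $P_1, Q_1, \ldots, U_3$. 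As in Theorem 1, I would then check that these edge-code families are pairwise disjoint and that their cardinalities sum to $|E(FCS_{a,b,c})| = 3(5a+5b+5c-21)$; together these give that the assignment $\epsilon \mapsto \gamma_E(\epsilon|U_E)$ is injective, so $U_E$ is an edge metric generator and $edim(FCS_{a,b,c}) \le 3$.

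For the lower bound I would argue $edim(FCS_{a,b,c}) \ge 3$ by contradiction. Since $FCS_{a,b,c}$ is not a path, Proposition 1 rules out a singleton edge metric generator, so it suffices to eliminate two-vertex generators. I would reuse the ten-case framework of Theorem 1, classifying a putative pair $U_E=\{x,y\}$ according to which of the four cycles (the outer $pqr$-cycle and the first, second, and third interior cycles) each of $x$ and $y$ lies on. In every subcase I would produce two distinct edges with identical edge codes. These unresolved pairs are located using the point symmetry of $FCS_{a,b,c}$ that interchanges the paired vertex families ($P_1\leftrightarrow P_2$, $S_1\leftrightarrow S_2$, $R_3\leftrightarrow U_3$, and so on): for any fixed placement of $x$ and $y$, one can exhibit two edges — typically a pair of consecutive segment edges, or a pair of rung edges, on a cycle lying opposite to both base vertices — that are equidistant from each of $x$ and $y$ and therefore share an edge code. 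This contradiction forces $|U_E| \ge 3$ and completes the proof.

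The step I expect to be the main obstacle is the disjointness verification in the upper bound. Because each edge code is a coordinatewise minimum of two vertex codes, the minima can collapse geometrically distinct edges — especially the rung edges and the twelve special junction edges such as $p_{3,2a-5}t_{2,j-3}$ — onto triples that coincide in one or more coordinates with codes from a neighbouring family. Consequently, disjointness cannot simply be inherited from the vertex case but must be rechecked family by family near the cycle junctions, and it is ultimately the exact count (total number of edge codes equal to $|E(FCS_{a,b,c})|$) that certifies injectivity, exactly as the cardinality argument certified it for the vertices in Theorem 1.
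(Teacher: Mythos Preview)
Your proposal is correct and follows essentially the same route as the paper: the same three-vertex set $U_E=\{p_{1,1},r_{1,1},r_{2,k}\}$ is used, edge codes are tabulated family by family and shown to be pairwise disjoint with total cardinality $|E(FCS_{a,b,c})|$, and the lower bound is obtained via the identical ten-case elimination of two-vertex generators. The only cosmetic difference is that the paper lists the edge codes directly rather than deriving them as coordinatewise minima of the vertex codes from Theorem~1.
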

\begin{proof}
In order to show that $edim(FCS_{a,b,c}) \leq 3$, we construct an edge metric generator for $FCS_{a,b,c}$. Let $U_{E}=\{p_{1,1}, r_{1,1}, r_{2, k}\}$ be a set of distinct vertices from $FCS_{a,b,c}$. We claim that $U_{E}$ is an edge metric generator for $FCS_{a,b,c}$. Now, to obtain $edim(FCS_{a,b,c}) \leq 3$, we can give edge metric coordinate to every edge of $FCS_{a,b,c}$ with respect to $U_{E}$. For the edges $\{\eta=p_{1,d}p_{1,d+1}|1 \leq d \leq 2a-2\}$, the set of edge metric coordinates is as follow:\\
$P_{1}=\{\gamma_{E}(\eta|U_{E})=(d-1, 2a+2c-d-2, 2b+2-3)|d=1\}\cup \{\gamma_{E}(\eta|U_{E})=(d-1, 2a+2c-d-2, 2b+2c+d-5)|2\leq d \leq 2a-4\}\cup \{\gamma_{E}(\eta|U_{E})=(d-1, 2a+2c-d-2, 2a+4b-8)|2a-3 \leq d \leq 2a-2\}$.\\\\
For the edges $\{\eta=q_{1,d}q_{1,d+1}|1 \leq d \leq 2c-2\}$, the set of edge metric coordinates is as follow:\\
$Q_{1}=\{\gamma_{E}(\eta|U_{E})=(2a+d-2, 2c-d-1, 2a+2b+2c-8)|1 \leq d \leq 2\}\cup\{\gamma_{E}(\eta|U_{E})=(2a+d-2, 2c-d-1, 2a+2b+2c-d-5)|3 \leq d \leq 2c-3\}\cup\{\gamma_{E}(\eta|U_{E})=(2a+d-2, 2c-d-1, 2a+2b-2)|d=2c-2\}$.\\\\
For the edges $\{\eta=r_{1,d}r_{1,d+1}|1 \leq d \leq 2b-2\}$, the set of edge metric coordinates is as follow:\\
$R_{1}=\{\gamma_{E}(\eta|U_{E})=(2a+2c-3, d-1, 2a+2b-d-1)|d=1\}\cup\{\gamma_{E}(\eta|U_{E})=(2a+2c+d-5, d-1, 2a+2b-d-1)|2 \leq d \leq 2b-4\}\cup\{\gamma_{E}(\eta|U_{E})=(2a+2b+2c-9, d-1, 2a+2b-d-1)|2b-3 \leq d \leq 2b-2\}$.\\\\
For the edges $\{\eta=p_{2,d}p_{2,d+1}|1 \leq d \leq 2a-2\}$, the set of edge metric coordinates is as follow:\\
$P_{2}=\{\gamma_{E}(\eta|U_{E})=(2b+2c-2, 2a+2b-d-1, d)|d=1\}\cup \{\gamma_{E}(\eta|U_{E})=(2b+2c+d-4, 2a+2b-d-1, d)|2 \leq d \leq 2a-4\}\cup \{\gamma_{E}(\eta|U_{E})=(2a+2b+2c-8, 2a+2b-d-1, d)| 2a-3 \leq d \leq 2a-2\}$.\\\\
For the edges $\{\eta=q_{2,d}q_{2,d+1}|1 \leq d \leq 2c-2\}$, the set of edge metric coordinates is as follow:\\
$Q_{2}=\{\gamma_{E}(\eta|U_{E})=(d, 2a+2c-2, 2b+2c-d-3)|d=1\}\cup \{\gamma_{E}(\eta|U_{E})=(d, 2a+2c+d-4, 2b+2c-d-3))|2 \leq d \leq 2c-4\}\cup \{\gamma_{E}(\eta|U_{E})=(d, 2a+2b+2c-8, 2b+2c-d-3))|2c-3 \leq d \leq 2c-2\}$.\\\\
For the edges $\{\eta=r_{2,d}r_{2,d+1}|1 \leq d \leq 2b-2\}$, the set of edge metric coordinates is as follow:\\
$R_{2}=\{\gamma_{E}(\eta|U_{E})=(2c+d-1, 2a+2b+2c-9, 2b-d-2)|1\leq d \leq 2\}\cup\{\gamma_{E}(\eta|U_{E})=(2c+d-1, 2a+4b-d-6, 2b-d-2)|3 \leq d \leq 2b-3\}\cup \{\gamma_{E}(\eta|U_{E})=(2c+d-1, 2a+2b-3, 2b-d-2)|d=2b-2\}$.\\\\
For the edges $\{\eta=s_{1,d}s_{1,d+1}|1 \leq d \leq 2a-4\}$, the set of edge metric coordinates is as follow:\\
$S_{1}=\{\gamma_{E}(\eta|U_{E})=(d+1, 2a+2c-d-4, 2b+2c+d-5)|1 \leq d \leq 2a-6\}\cup \{\gamma_{E}(\eta|U_{E})=(d+1, 2a+2c-d-4, 2a+4b-11)|2a-5 \leq d \leq 2a-4\}$.\\\\
For the edges $\{\eta=t_{1,d}t_{1,d+1}|1 \leq d \leq 2c-4\}$, the set of edge metric coordinates is as follow:\\
$T_{1}=\{\gamma_{E}(\eta|U_{E})=(2a+d, 2c-d-1, 2a+2b+2c-10)|1 \leq d \leq 2\}\{\gamma_{E}(\eta|U_{E})=(2a+d, 2c-d-1, 2a+2b+2c-d-7)|3 \leq d \leq 2c-4\}$.\\\\
For the edges $\{\eta=u_{1,d}u_{1,d+1}|1 \leq d \leq 2b-4\}$, the set of edge metric coordinates is as follow:\\
$U_{1}=\{\gamma_{E}(\eta|U_{E})=(2a+2c+d-5, d+1, 2a+2b-d-4)|1 \leq d \leq 2b-6\}\cup\{\gamma_{E}(\eta|U_{E})=(2a+2b+2c-11, d+1, 2a+2b-d-4)|2b-3 \leq d \leq 2b-4\}$.\\\\
For the edges $\{\eta=s_{2,d}s_{2,d+1}|1 \leq d \leq 2a-4\}$, the set of edge metric coordinates is as follow:\\
$S_{2}=\{\gamma_{E}(\eta|U_{E})=(2b+2c+d-4, 2a+2b-d-5, d+2)|1 \leq d \leq 2a-6\}\cup\{\gamma_{E}(\eta|U_{E})=(2a+2b+2c-10, 2a+2b-d-5, d+2)|2a-5 \leq d \leq 2a-4\}$.\\\\
For the edges $\{\eta=t_{2,d}t_{2,d+1}|1 \leq d \leq 2c-4\}$, the set of edge metric coordinates is as follow:\\
$T_{2}=\{\gamma_{E}(\eta|U_{E})=(d+2, 2a+2c+d-4, 2b+2c-d-7)|1 \leq d \leq 2b-6\}\cup\{\gamma_{E}(\eta|U_{E})=(d+2, 2a+2b+2c-10, 2b+2c-d-7)|2b-5 \leq d \leq 2c-4\}$.\\\\
For the edges $\{\eta=u_{2,d}u_{2,d+1}|1 \leq d \leq 2b-4\}$, the set of edge metric coordinates is as follow:\\
$U_{2}=\{\gamma_{E}(\eta|U_{E})=(2c+d-1, 2a+2b+2c-11, 2b-d-2)|1 \leq d \leq 2\}\cup\{\gamma_{E}(\eta|U_{E})=(2c+d-1, 2a+4b-d-8, 2b-d-2)|3 \leq d \leq 2b-4\}$.\\\\
For the edges $\{\eta=p_{3,d}p_{3,d+1}|1 \leq d \leq 2a-6\}$, the set of edge metric coordinates is as follow:\\
$P_{3}=\{\gamma_{E}(\eta|U_{E})=(2a+2c-d-7, 2b+2c+d-6, 2a+2b-d-7)|1 \leq d \leq 2a-6\}$.\\\\
For the edges $\{\eta=q_{3,d}q_{3,d+1}|1 \leq d \leq 2c-6\}$, the set of edge metric coordinates is as follow:\\
$Q_{3}=\{\gamma_{E}(\eta|U_{E})=(2a+2b+d-7, 2b+2c-d-8, 2a+2c-d-6)|1 \leq d \leq 2c-6\}$.\\\\
For the edges $\{\eta=r_{3,d}r_{3,d+1}|1 \leq d \leq 2b-6\}$, the set of edge metric coordinates is as follow:\\
$R_{3}=\{\gamma_{E}(\eta|U_{E})=(2a+2b-d-8, 2b+2c-d-6, 2a+2b+d-7)|1 \leq d \leq 2b-6\}$.\\\\
For the edges $\{\eta=s_{3,d}s_{3,d+1}|1 \leq d \leq 2a-6\}$, the set of edge metric coordinates is as follow:\\
$S_{3}=\{\gamma_{E}(\eta|U_{E})=(2a+2c-d-6, 2b+2c+d-7, 2a+2b-d-8)|1 \leq d \leq 2a-6\}$.\\\\
For the edges $\{\eta=t_{3,d}t_{3,d+1}|1 \leq d \leq 2c-6\}$, the set of edge metric coordinates is as follow:\\
$T_{3}=\{\gamma_{E}(\eta|U_{E})=(2a+2b+d-6, 2b+2c-d-7, 2a+2c-d-7)|1 \leq d \leq 2c-6\}$.\\\\
For the edges $\{\eta=u_{3,d}u_{3,d+1}|1 \leq d \leq 2b-6\}$, the set of edge metric coordinates is as follow:\\
$U_{3}=\{\gamma_{E}(\eta|U_{E})=(2a+2b-d-7, 2b+2c-d-7, 2a+2b+d-6)|1 \leq d \leq 2b-6\}$.\\\\
For the edges $\{\eta_{1}=p_{1,i}q_{1,1}, \eta_{2}=s_{1,i-2}t_{1,1}, \eta_{3}=r_{1,1}q_{1,j}, \eta_{4}=t_{1,j}u_{1,1}, \eta_{5}=r_{1,k}p_{2,i}, \eta_{6}=u_{1,k-2}s_{2,i-2}, \eta_{7}=p_{2,1}r_{2,k}, \eta_{8}=s_{2,1}u_{2,k-2}, \eta_{9}=r_{2,1}q_{2,j}, \eta_{10}=u_{2,1}t_{2,j-2}, \eta_{11}=p_{1,1}q_{2,1}, \eta_{12}=u_{1,1}t_{2,1}\}$, the set of edge metric coordinates is as follow:\\
$V_{1}=\{\gamma_{E}(\eta_{1}|U_{E})=(2a-2, 2c-1, 2a+4b-8), \gamma_{E}(\eta_{2}|U_{E})=(2a-2, 2c-1, 2a+4b-10), \gamma_{E}(\eta_{3}|U_{E})=(2a+2c-3, 0, 2a+2b-2), \gamma_{E}(\eta_{4}|U_{E})=(2a+2c-5, 2, 2a+2b-4), \gamma_{E}(\eta_{5}|U_{E})=(2a+2b+2c-10, 2b-2, 2a-1), \gamma_{E}(\eta_{6}|U_{E})=(2a+2b+2c-8, 2b-2, 2a-1), \gamma_{E}(\eta_{7}|U_{E})=(2b+2c-2, 2a+2b-3, 0), \gamma_{E}(\eta_{8}|U_{E})=(2b+2c-4, 2a+2b-5, 2), \gamma_{E}(\eta_{9}|U_{E})=(2c-1, 2a+2b+2c-10, 2b-2), \gamma_{E}(\eta_{10}|U_{E})=(2c-1, 2a+2b+2c-8, 2b-4), \gamma_{E}(\eta_{11}|U_{E})=(0, 2a+2c-2, 2a+2c-3), \gamma_{E}(\eta_{12}|U_{E})=(2, 2a+2c-4, 2b+2c-5)\}$.\\\\
For the edges $\{\eta=p_{1,2d}s_{1,2d-1}|1 \leq d \leq a-1\}$, the set of edge metric coordinates is as follow:\\
$PS_{1}=\{\gamma_{E}(\eta|U_{E})=(2d-1, 4a+2c-2d-12, 4a+2c+2d-18)|1 \leq d \leq a-2\}\cup\{\gamma_{E}(\eta|U_{E})=(2d-1, 4a+2c-2d-12, 2a+4b-10)|d=a-1 \}$.\\\\
For the edges $\{\eta=q_{1,2d}t_{1,2d-1}|1 \leq d \leq c-1\}$, the set of edge metric coordinates is as follow:\\
$QT_{1}=\{\gamma_{E}(\eta|U_{E})=(4a+2d-13, 4c-2d-8, 2a+2b+2c-9)|d=1\}\cup\{\gamma_{E}(\eta|U_{E})=(4a+2d-13, 4c-2d-8, 4a+2b+2c-2d-15)|1 \leq d \leq c-1\}$.\\\\
For the edges $\{\eta=r_{1,2d}u_{1,2d-1}|1 \leq d \leq b-1\}$, the set of edge metric coordinates is as follow:\\
$RU_{1}=\{\gamma_{E}(\eta|U_{E})=(4a+2c+2d-16, 2d-1, 4a+2b-2d-12)|1 \leq d \leq b-2\}\cup\{\gamma_{E}(\eta|U_{E})=(4a+2c+2d-16, 2d-1, 2a+2b+2c-10)|d=b-1\}$.\\\\
For the edges $\{\eta=p_{2,2d}s_{2,2d-1}|1 \leq d \leq a-1\}$, the set of edge metric coordinates is as follow:\\
$PS_{2}=\{\gamma_{E}(\eta|U_{E})=(4b+2c+2d-13, 4a+2b-2d-13, 2d-1)|1 \leq d \leq a-2\}\cup\{\gamma_{E}(\eta|U_{E})=(2a+2b+2c-9, 4a+2b-2d-13, 2d-1)|d=a-1\}$.\\\\
For the edges $\{\eta=r_{2,2d}u_{2,2d-1}|1 \leq d \leq b-1\}$, the set of edge metric coordinates is as follow:\\
$RU_{2}=\{\gamma_{E}(\eta|U_{E})=(4c+2d-10, 2a+2b+2c-10, 4b-2d-8)|d=1\}\cup\{\gamma_{E}(\eta|U_{E})=(4c+2d-10, 4a+2b-2d-16, 4b-2d-8)|2 \leq d \leq b-1\}$.\\\\
For the edges $\{\eta=q_{2,2d}t_{2,2d-1}|1 \leq d \leq c-1\}$, the set of edge metric coordinates is as follows:\\
$QT_{2}=\{\gamma_{E}(\eta|U_{E})=(2d, 4a+2c+2d-15, 4c+2b-2d-11)|1\leq d \leq c-2\}\cup\{\gamma_{E}(\eta|U_{E})=(2d, 2a+2b+2c-8, 4c+2b-2d-11)|d=c-1 \}$.\\\\
For the edges $\{\eta=p_{3,2d-1}s_{3,2d-1}|1 \leq d \leq a-2\}$, the set of edge metric coordinates is as follow:\\
$PS_{3}=\{\gamma_{E}(\eta|U_{E})=(4a+2c-2d-15, 4b+2c+2d-16, 4a+2b-2d-16)|1 \leq d \leq a-2\}$.\\\\
For the edges $\{\eta=q_{3,2d-1}t_{3,2d-1}|1 \leq d \leq c-2\}$, the set of edge metric coordinates is as follows:\\
$QT_{3}=\{\gamma_{E}(\eta|U_{E})=(4a+2b+2d-18, 4b+2c-2d-14, 4a+2c-2d-15)|1\leq d \leq c-2\}$.\\\\
For the edges $\{\eta=r_{3,2d-1}u_{3,2d-1}|1 \leq d \leq b-2\}$, the set of edge metric coordinates is as follow:\\
$RU_{3}=\{\gamma_{E}(\eta|U_{E})=(4a+2b-2d-16, 2b+4c-2d-13, 4a+2b+2d-18)|1 \leq d \leq b-2\}$.\\\\
For the edges $\{\eta_{1}=t_{2,j-3}p_{3,i-5}, \eta_{2}=u_{2,2}s_{3,i-5}, \eta_{3}=s_{2,i-3}t_{3,j-5}, \eta_{4}=u_{1,k-3}q_{3,j-5}, \eta_{5}=t_{1,2}u_{3,k-5}, \eta_{6}=s_{1,i-3}r_{3,k-5}, \eta_{7}=p_{3,1}r_{3,1}, \eta_{8}=u_{3,1}q_{3,1}, \eta_{9}=s_{3,1}t_{3,1}\}$, the set of edge metric coordinates is as follow:\\
$V_{2}=\{\gamma_{E}(\eta_{1}|U_{E})=(2c-2, 2a+2b+2c-11, 2b-1), \gamma_{E}(\eta_{2}|U_{E})=(2c, 2a+2b+2c-12, 2b-3), \gamma_{E}(\eta_{3}|U_{E})=(2a+2b+2c-11, 2b-1, 2a-2), \gamma_{E}(\eta_{4}|U_{E})=(2a+2b+2c-12, 2b-3, 2a), \gamma_{E}(\eta_{5}|U_{E})=(2a-1, 2c-2, 2a+2b+2c-11), \gamma_{E}(\eta_{6}|U_{E})=(2a-3, 2c, 2a+2b+2c-12), \gamma_{E}(\eta_{7}|U_{E})=(2b+2b-8, 2b+2c-8, 2a+2b-7), \gamma_{E}(\eta_{8}|U_{E})=(2a+2b-7, 2b+2c-8, 2a+2b-6), \gamma_{E}(\eta_{9}|U_{E})=(2a+2c-6, 2b+2c-7, 2a+2b-8)\}$.\\\\

Now, from these sets of edge metric codes for the graph $FCS_{a,b,c}$, we find that $|P_{1}|=|P_{2}|=2a-2$, $|Q_{1}|=|Q_{2}|=2c-2$, $|R_{1}|=|R_{2}|=2b-2$, $|S_{1}|=|S_{2}|=2a-4$, $|T_{1}|=|T_{2}|=2c-4$, $|U_{1}|=|U_{2}|=2b-4$, $|P_{3}|=|S_{3}|=2a-6$, $|Q_{3}|=|T_{3}|=2c-6$, $|R_{3}|=|U_{3}|=2b-6$, $|PS_{1}|=|PS_{2}|=a-1$, $|RU_{1}|=|RU_{2}|=b-1$, $|QT_{1}|=|QT_{2}|=c-1$, $|PS_{3}|=a-2$, $|QT_{3}|=c-2$, $|RU_{3}|=b-2$, $|V_{1}|=12$, and $|V_{2}|=9$. We see that the sum of all of these cardinalities is equal to $|E(FCS_{a,b,c})|$ and which is $3(5a+5b+5c-21)$. Moreover, all of these sets are pairwise disjoint, which implies that $edim(FCS_{a,b,c})\leq 3$. To complete the proof, we have to show that $edim(FCS_{a,b,c})\geq 3$. To show this, we have to prove that there exists no edge metric generator $U_{E}$ for $FCS_{a,b,c}$ such that
$|U_{E}|\leq2$. Since, the graph $FCS_{a,b,c}$ is not a path graph, so the possibility of a singleton edge metric generator for $FCS_{a,b,c}$ is ruled out \cite{emd}. Next, suppose on the contrary that there exists an edge metric generator $U_{E}$ with $|U_{E}|=2$. Therefore, we have the following cases to be discussed (for the contradictions, the naturals $a$, $b$, and $c$ are $\geq5$):\\\\
\textbf{Case(\rom{1})} When $U_{E}=\{a, b\}$, where $a$ and $b$ are the vertices from the outer $pqr$-cycle of $FCS_{a,b,c}$.

\begin{itemize}
  \item Suppose $U_{E}=\{p_{1,1}, p_{1,d}\}$, $p_{1,d}$ ($2\leq d \leq i$). Then $\gamma(u_{3,k-5}r_{3,k-5}|U_{E})=\gamma(r_{3,k-5}r_{3,k-6} |U_{E})$, for $2\leq d\leq i$, a contradiction.
  \item Suppose $U_{E}=\{p_{1,1}, q_{1,d}\}$, $q_{1,d}$ ($1\leq d \leq j$). Then $\gamma(r_{1,2}u_{1,1}|U_{E})=\gamma(u_{1,1}u_{1,2} |U_{E})$, for $1\leq d\leq j-1$; $\gamma(p_{1,i}s_{1,i-2}|U_{E})=\gamma(s_{1,i-2}s_{1,i-3}|U_{E})$ when $d=j$, a contradiction.
  \item Suppose $U_{E}=\{p_{1,1}, r_{1,d}\}$, $r_{1,d}$ ($1\leq d \leq k$). Then $\gamma(p_{3,1}s_{3,1}|U_{E})=\gamma(p_{3,1}r_{3,1}|U_{E})$, for $1\leq d\leq k$, a contradiction.
  \item Suppose $U_{E}=\{p_{1,1}, p_{2,d}\}$, $p_{2,d}$ ($1\leq d \leq i$). Then $\gamma(t_{1,1}q_{3,1}|U_{E})=\gamma(q_{3,1}q_{3,2}|U_{E})$, for $1\leq d\leq i$, a contradiction.
  \item Suppose $U_{E}=\{p_{1,1}, r_{2,d}\}$, $r_{2,d}$ ($1\leq d \leq k$). Then $\gamma(s_{2,1}s_{2,2}|U_{E})=\gamma(s_{2,1}p_{2,2}|U_{E})$, for $1\leq d \leq k-1$; $\gamma(s_{1,1}t_{2,1}|U_{E})=\gamma(q_{2,2}t_{2,1}|U_{E})$, for $d=k$, a contradiction.
  \item Suppose $U_{E}=\{p_{1,1}, q_{2,d}\}$, $q_{2,d}$ ($1\leq d \leq j$). Then $\gamma(s_{2,1}s_{2,2}|U_{E})=\gamma(s_{2,1}p_{2,2}|U_{E})$, for $1\leq d\leq j$, a contradiction.
  \end{itemize}

\textbf{Case(\rom{2})} When $U_{E}=\{a, b\}$, where $a$ and $b$ are the vertices from the first interior cycle of $FCS_{a,b,c}$.

\begin{itemize}
  \item Suppose $U_{E}=\{s_{1,1}, s_{1,d}\}$, $s_{1,d}$ ($2\leq d \leq i-2$). Then $\gamma(t_{2,1}t_{2,2}|U_{E})=\gamma(t_{2,1}q_{2,2}|U_{E})$, for $2\leq d\leq i-1$, a contradiction.
  \item Suppose $U_{E}=\{s_{1,1}, r_{3,d}\}$, $r_{3,d}$ ($1\leq d \leq k-5$). Then $\gamma(q_{1,4}t_{1,3}|U_{E})=\gamma(t_{1,3}t_{1,4}|U_{E})$, for $1\leq d\leq k-5$, a contradiction.
  \item Suppose $U_{E}=\{s_{1,1}, t_{2,d}\}$, $t_{2,d}$ ($1\leq d \leq j-2$). Then $\gamma(p_{1,2}s_{1,1}|U_{E})=\gamma(s_{1,1}s_{1,2}|U_{E})$, for $1\leq d\leq j-2$, a contradiction.
  \item Suppose $U_{E}=\{s_{1,1}, p_{3,d}\}$, $p_{3,d}$ ($1\leq d \leq i-5$). Then $\gamma(q_{3,1}t_{3,1}|U_{E})=\gamma(t_{3,1}t_{3,2}|U_{E})$, for $1\leq d\leq 2a-3$, a contradiction.
  \end{itemize}

\textbf{Case(\rom{3})} When $U_{E}=\{a, b\}$, where $a$ and $b$ are the vertices from the second interior cycle of $FCS_{a,b,c}$.

\begin{itemize}
  \item Suppose $U_{E}=\{u_{1,1}, u_{1,d}\}$, $u_{1,d}$ ($1\leq d \leq k-2$). Then $\gamma(u_{1,1}r_{1,2}|U_{E})=\gamma(u_{1,1}t_{1,j-2}|U_{E})$, for $1\leq d\leq k-2$, a contradiction.
  \item Suppose $U_{E}=\{u_{1,1}, t_{1,d}\}$, $t_{1,d}$ ($1\leq d \leq j-2$). Then $\gamma(u_{1,1}u_{1,2}|U_{E})=\gamma(u_{1,1}r_{1,2}|U_{E})$, for $1\leq d\leq j-2$, a contradiction.
  \item Suppose $U_{E}=\{u_{1,1}, u_{3,d}\}$, $u_{3,d}$ ($1\leq d \leq k-5$). Then $\gamma(s_{1,i-4}p_{1,i-3}|U_{E})=\gamma(s_{1,i-4}s_{1,i-5}|U_{E})$, for $1\leq d\leq k-5$, a contradiction.
  \item Suppose $U_{E}=\{u_{1,1}, q_{3,d}\}$, $q_{3,d}$ ($1\leq d \leq j-5$). Then $\gamma(p_{3,1}s_{3,1}|U_{E})=\gamma(s_{3,1}s_{3,2}|U_{E})$, for $1\leq d\leq j-5$, a contradiction.
\end{itemize}

\textbf{Case(\rom{4})} When $U_{E}=\{a, b\}$, where $a$ and $b$ are the vertices from the third interior cycle of $FCS_{a,b,c}$.

\begin{itemize}
  \item Suppose $U_{E}=\{s_{2,1}, s_{2,d}\}$, $s_{2,d}$ ($2\leq d \leq i-2$). Then $\gamma(u_{2,k-2}u_{2,k-3}|U_{E})=\gamma(u_{2,k-2}r_{2,k-1}|U_{E})$, for $2\leq d\leq i-2$, a contradiction.
  \item Suppose $U_{E}=\{s_{2,1}, u_{2,d}\}$, $u_{2,d}$ ($1\leq d \leq k-2$). Then $\gamma(s_{2,1}s_{2,2}|U_{E})=\gamma(s_{2,1}p_{2,2}|U_{E})$, for $1\leq d\leq k-2$, a contradiction.
  \item Suppose $U_{E}=\{s_{2,1}, s_{3,d}\}$, $s_{3,d}$ ($1\leq d \leq i-5$). Then $\gamma(r_{1,k-3}u_{1,k-4}|U_{E})=\gamma(u_{1,k-4}u_{1,k-5}|U_{E})$, for $1\leq d\leq i-5$, a contradiction.
  \item Suppose $U_{E}=\{s_{2,1}, t_{3,d}\}$, $t_{3,d}$ ($1\leq d \leq j-5$). Then $\gamma(r_{1,k-3}u_{1,k-4}|U_{E})=\gamma(u_{1,k-4}u_{1,k-5}|U_{E})$, for $1\leq d\leq j-5$, a contradiction.
\end{itemize}

\textbf{Case(\rom{5})} When $U_{E}=\{a, b\}$, where $a$ is in outer $pqr$-cycle and $b$ is in the first interior cycle of $FCS_{a,b,c}$.

\begin{itemize}
  \item Suppose $U_{E}=\{p_{1,1}, s_{1,d}\}$, $s_{1,d}$ ($1\leq d \leq i-2$). Then $\gamma(t_{1,1}q_{1,2}|U_{E})=\gamma(t_{1,1}t_{1,2}|U_{E})$, for $1\leq d\leq i-2$, a contradiction.
  \item Suppose $U_{E}=\{p_{1,1}, t_{2,d}\}$, $t_{2,d}$ ($1\leq d \leq j-2$). Then $\gamma(u_{2,1}u_{2,2}|U_{E})=\gamma(u_{2,1}r_{2,2}|U_{E})$, for $1\leq d\leq j-2$, a contradiction.
  \item Suppose $U_{E}=\{p_{1,1}, r_{3,d}\}$, $r_{3,d}$ ($1\leq d \leq k-5$). Then $\gamma(q_{1,4}t_{1,3}|U_{E})=\gamma(t_{1,3}t_{1,4}|U_{E})$, for $1\leq d\leq k-5$, a contradiction.
  \item Suppose $U_{E}=\{p_{1,1}, p_{3,d}\}$, $p_{3,d}$ ($1\leq d \leq i-5$). Then $\gamma(q_{3,1}t_{3,1}|U_{E})=\gamma(t_{3,1}t_{3,2}|U_{E})$, for $1\leq d\leq i-5$, a contradiction.
\end{itemize}

\textbf{Case(\rom{6})} When $U_{E}=\{a, b\}$, where $a$ is in outer $pqr$-cycle and $b$ is in the second interior cycle of $FCS_{a,b,c}$.

\begin{itemize}
  \item Suppose $U_{E}=\{p_{1,1}, t_{1,d}\}$, $t_{1,d}$ ($1\leq d \leq j-2$). Then $\gamma(u_{1,1}u_{1,2}|U_{E})=\gamma(u_{1,1}r_{1,2}|U_{E})$, for $1\leq d\leq j-2$, a contradiction.
  \item Suppose $U_{E}=\{p_{1,1}, u_{1,d}\}$, $u_{1,d}$ ($1\leq d \leq k-2$). Then $\gamma(u_{1,k-2}s_{2,i-2}|U_{E})=\gamma(r_{1,k-2}u_{1,k-2}|U_{E})$, for $1\leq d\leq k-2$, a contradiction.
  \item Suppose $U_{E}=\{p_{1,1}, u_{3,d}\}$, $u_{3,d}$ ($1\leq d \leq k-5$). Then $\gamma(q_{1,4}t_{1,3}|U_{E})=\gamma(t_{1,3}t_{1,4}|U_{E})$, for $1\leq d\leq k-5$, a contradiction.
  \item Suppose $U_{E}=\{p_{1,1}, q_{3,d}\}$, $q_{3,d}$ ($1\leq d \leq j-5$). Then $\gamma(u_{1,k-2}u_{1,k-3}|U_{E})=\gamma(u_{1,k-3}u_{1,k-4}|U_{E})$, for $1\leq d\leq j-5$, a contradiction.
\end{itemize}

\textbf{Case(\rom{7})} When $U_{E}=\{a, b\}$, where $a$ is in outer $pqr$-cycle and $b$ is in the third interior cycle of $FCS_{a,b,c}$.

\begin{itemize}
  \item Suppose $U_{E}=\{p_{1,1}, u_{2,d}\}$, $u_{2,d}$ ($1\leq d \leq k-2$). Then $\gamma(s_{2,1}s_{2,2}|U_{E})=\gamma(s_{2,1}p_{2,2}|U_{E})$, for $1\leq d\leq k-2$, a contradiction.
  \item Suppose $U_{E}=\{p_{1,1}, s_{3,d}\}$, $s_{3,d}$ ($1\leq d \leq i-5$). Then $\gamma(q_{3,1}t_{3,1}|U_{E})=\gamma(t_{3,1}t_{3,2}|U_{E})$, for $1\leq d\leq i-5$, a contradiction.
  \item Suppose $U_{E}=\{p_{1,1}, s_{2,d}\}$, $s_{2,d}$ ($1\leq d \leq i-2$). Then $\gamma(r_{1,k}r_{1,k-1}|U_{E})=\gamma(r_{1,k-1}r_{1,k-2} |U_{E})$, for $1\leq d\leq i-2$, a contradiction.
  \item Suppose $U_{E}=\{p_{1,1}, t_{3,d}\}$, $t_{3,d}$ ($1\leq d \leq j-5$). Then $\gamma(r_{1,k}r_{1,k-1}|U_{E})=\gamma(r_{1,k-1}r_{1,k-2}|U_{E})$, for $1\leq d\leq j-5$, a contradiction.
\end{itemize}

\textbf{Case(\rom{8})} When $U_{E}=\{a, b\}$, where $a$ is in first interior cycle and $b$ is in the second interior cycle of $FCS_{a,b,c}$.

\begin{itemize}
  \item Suppose $U_{E}=\{s_{1,1}, t_{1,d}\}$, $t_{1,d}$ ($1\leq d \leq j-2$). Then $\gamma(q_{2,2}t_{2,1}|U_{E})=\gamma(t_{2,1}t_{2,2}|U_{E})$, for $1\leq d\leq j-2$, a contradiction.
  \item Suppose $U_{E}=\{s_{1,1}, u_{3,d}\}$, $u_{3,d}$ ($1\leq d \leq k-5$). Then $\gamma(q_{3,1}q_{3,2}|U_{E})=\gamma(q_{3,1}t_{3,1}|U_{E})$, for $1\leq d\leq k-5$, a contradiction.
  \item Suppose $U_{E}=\{s_{1,1}, u_{1,d}\}$, $u_{1,d}$ ($1\leq d \leq k-2$). Then $\gamma(r_{1,k-1}u_{1,k-2}|U_{E})=\gamma(u_{1,k-2}s_{2,i-2}|U_{E})$, for $1\leq d\leq k-2$, a contradiction.
  \item Suppose $U_{E}=\{s_{1,1}, q_{3,d}\}$, $q_{3,d}$ ($1\leq d \leq j-5$). Then $\gamma(u_{1,k-2}u_{1,k-3}|U_{E})=\gamma(u_{1,k-3}u_{1,k-4}|U_{E})$, for $1\leq d\leq j-5$, a contradiction.
\end{itemize}

\textbf{Case(\rom{9})} When $U_{E}=\{a, b\}$, where $a$ is in first interior cycle and $b$ is in the third interior cycle of $FCS_{a,b,c}$.

\begin{itemize}
  \item Suppose $U_{E}=\{s_{1,1}, u_{2,d}\}$, $u_{2,d}$ ($1\leq d \leq k-2$). Then $\gamma(p_{1,2}s_{1,1}|U_{E})=\gamma(s_{1,1}s_{1,3}|U_{E})$, for $1\leq d\leq k-2$, a contradiction.
  \item Suppose $U_{E}=\{s_{1,1}, s_{2,d}\}$, $s_{2,d}$ ($1\leq d \leq i-2$). Then $\gamma(r_{1,k}r_{1,k-1}|U_{E})=\gamma(r_{1,k-1}r_{1,k-2}|U_{E})$, for $1\leq d\leq i-2$, a contradiction.
  \item Suppose $U_{E}=\{s_{1,1}, t_{3,d}\}$, $t_{3,d}$ ($1\leq d \leq j-5$). Then $\gamma(r_{1,k}r_{1,k-1}|U_{E})=\gamma(r_{1,k-1}r_{1,k-2}|U_{E})$, for $1\leq d\leq j-5$, a contradiction.
  \item Suppose $U_{E}=\{s_{1,1}, s_{3,d}\}$, $s_{3,d}$ ($1\leq d \leq i-5$). Then $\gamma(q_{3,1}t_{3,1}|U_{E})=\gamma(t_{3,1}t_{3,2}|U_{E})$, for $1\leq d\leq i-5$, a contradiction.
\end{itemize}

\textbf{Case(\rom{10})} When $U_{E}=\{a, b\}$, where $a$ is in second interior cycle and $b$ is in the third interior cycle of $FCS_{a,b,c}$.

\begin{itemize}
  \item Suppose $U_{E}=\{u_{1,1}, u_{2,d}\}$, $u_{2,d}$ ($1\leq d \leq k-2$). Then $\gamma(q_{2,j}q_{2,j-1}|U_{E})=\gamma(q_{2,j-1}q_{2,j-2}|U_{E})$, for $1\leq d\leq k-2$, a contradiction.
  \item Suppose $U_{E}=\{u_{1,1}, s_{2,d}\}$, $s_{2,d}$ ($1\leq d \leq i-2$). Then $\gamma(r_{1,2}u_{1,1}|U_{E})=\gamma(u_{1,1}t_{1,j-2}|U_{E})$, for $1\leq d\leq i-2$, a contradiction.
  \item Suppose $U_{E}=\{u_{1,1}, t_{3,d}\}$, $t_{3,d}$ ($1\leq d \leq j-5$). Then $\gamma(s_{3,1}s_{3,2}|U_{E})=\gamma(p_{3,1}s_{3,1}|U_{E})$, for $1\leq d\leq j-5$, a contradiction.
  \item Suppose $U_{E}=\{u_{1,1}, s_{3,d}\}$, $s_{3,d}$ ($1\leq d \leq i-5$). Then $\gamma(u_{2,1}u_{2,d}|U_{E})=\gamma(u_{2,2}u_{2,3}|U_{E})$, for $1\leq d\leq i-5$, a contradiction.
\end{itemize}

As a result, we infer that for $FCS_{a,b,c}$, there is no edge resolving set $U_{E}$ such that $|U_{E}|=2$. Therefore, we must have $|U_{E}| \geq 3$ i.e., $edim(FCS_{a,b,c})\geq 3$. Hence, $edim(FCS_{a,b,c})=3$, which concludes the theorem.
\end{proof}
In terms of minimum IEMG, we have the following result

\begin{thm}
For $a,b,c\geq4$, the graph $FCS_{a,b,c}$ has an IEMG with cardinality three.
\end{thm}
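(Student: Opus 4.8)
The plan is to recycle the very set that Theorem $3$ already certifies as an edge metric generator, and merely verify that it is independent; no new resolvability computation is required. Concretely, I would take $U^{i}_{E}=\{p_{1,1},\,r_{1,1},\,r_{2,k}\}$, which is exactly the set $U_{E}$ used in the proof of Theorem $3$. That proof lists the edge codes of every edge of $FCS_{a,b,c}$ with respect to this set and shows they are pairwise distinct, so $U^{i}_{E}$ is already an edge metric generator; moreover $edim(FCS_{a,b,c})=3$ guarantees that three is the least possible cardinality. Hence it suffices to prove independence in order to exhibit a \emph{minimum} IEMG, and the whole argument is a direct transcription of the reduction used for the vertex case in Theorem $2$.

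The core step is the independence check, which amounts to reading off from $E(FCS_{a,b,c})$ the full neighbourhood of each of the three chosen vertices. The vertex $p_{1,1}$ sits at the start of the outer $pqr$-cycle and is incident only to $p_{1,2}$ (the cycle edge $p_{1,1}p_{1,2}$) and to $q_{2,1}$ (the connecting edge $p_{1,1}q_{2,1}$). The vertex $r_{1,1}$ is incident only to $r_{1,2}$ and to $q_{1,j}$ (the edge $q_{1,j}r_{1,1}$). The vertex $r_{2,k}$, with $k=2b-1$ odd, is incident only to $r_{2,k-1}$ and to $p_{2,1}$ (the edge $p_{2,1}r_{2,k}$); in particular the interior ``rung'' edges $r_{1,2d}u_{1,2d-1}$ and $r_{2,2d}t_{2,2d-1}$ attach only to even-indexed vertices and therefore miss $r_{1,1}$ and $r_{2,k}$. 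Since none of $p_{1,2},q_{2,1},r_{1,2},q_{1,j},r_{2,k-1},p_{2,1}$ coincides with any of $p_{1,1},r_{1,1},r_{2,k}$, no two of the three chosen vertices are adjacent, so $U^{i}_{E}$ is an independent set.

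Combining the two observations, $U^{i}_{E}=\{p_{1,1},r_{1,1},r_{2,k}\}$ is simultaneously independent and an edge metric generator, hence an IEMG; its cardinality is three, which by Theorem $3$ is optimal. This matches the statement exactly.

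I do not anticipate a genuine obstacle: the edge-resolvability is inherited wholesale from Theorem $3$, and the independence verification is a finite adjacency inspection. The only point requiring mild care is confirming that all three vertices lie on the outer $pqr$-cycle and that each touches only an outer-cycle neighbour together with a single interior-connecting vertex, none of which belongs to the set — so the chosen vertices remain mutually non-adjacent despite the many rung edges joining the outer cycle to the three interior cycles of $FCS_{a,b,c}$.
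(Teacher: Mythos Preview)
Your proposal is correct and follows essentially the same approach as the paper: take the set $U_{E}^{i}=\{p_{1,1},r_{1,1},r_{2,k}\}$ already shown in Theorem~3 to be an edge metric generator, and observe that it is independent. Your explicit neighbourhood check is in fact more detailed than the paper's own argument, which simply invokes the definition of independence and refers back to the pattern of Theorems~1 and~3.
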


\begin{proof}
To show that, for zigzag edge coronoid fused with starphene $FCS_{a,b,c}$, there exists an IEMG $U_{E}^{i}$ with $|U_{E}^{i}|=3$, we follow the same technique as used in Theorem $3$.\\\\
Suppose $U_{E}^{i} = \{p_{1,1}, r_{1,1}, r_{1, k}\} \subset V(FCS_{a,b,c})$. Now, by using the definition of an independent set and following the same pattern as used in Theorem $1$, it is simple to show that the set of vertices $U_{E}^{i}= \{p_{1,1}, r_{1,1}, r_{2, k}\}$ forms an IEMG for $FCS_{a,b,c}$ with $|U_{E}^{i}|=3$, which concludes the theorem. \\
\end{proof}

\section{Conclusions}
In this paper, we have studied the minimum vertex and edge metric generators for the zigzag edge coronoid fused with starphene $FCS_{a,b,c}$ structure. For positive integers $a,b,c\geq4$, we have proved that $dim(FCS_{a,b,c})=edim(FCS_{a,b,c})=3$ (a partial response to the question raised recently in \cite{emd}). We also observed that the vertex and edge metric generators for $FCS_{a,b,c}$ are independent. In future, we will try to obtain the other variants of metric dimension (for instance, fault-tolerant metric dimension (vertex and edge), mixed metric dimension, etc) for the graph $FCS_{a,b,c}$.

\end{document}